\documentclass[a4paper, 11pt]{article}
\usepackage{amsmath, amsthm, amssymb, amscd, mathrsfs, amsfonts,graphicx, tikz, tikz-cd, pgf, hyperref, mathtools, scalerel, float, multicol, xcolor, url, enumitem, xcolor}
\usepackage{subcaption}
\usepackage{lmodern}
\usepackage[margin=2.5cm]{geometry}
\usepackage[utf8]{inputenc}
\usepackage{csquotes}
\usetikzlibrary{shapes.geometric}
\usetikzlibrary{angles}
\usepackage[T1]{fontenc}
\usetikzlibrary{calc}
\usepackage{authblk}
\usepackage[font=small,labelfont=bf]{caption}
\makeatletter \renewcommand{\fnum@figure}{Fig. \thefigure} \makeatother
\newtheorem{theorem}{Theorem}[section]

\newtheorem{definition}[theorem]{Definition}

\newtheorem{lemma}[theorem]{Lemma}

\newtheorem{proposition}[theorem]{Proposition}
\newtheorem{remark}[theorem]{Remark}

\numberwithin{equation}{section}
\numberwithin{figure}{section}

\definecolor{centralfill}{RGB}{62,78,92}
\definecolor{firstfill}{RGB}{83,155,211}
\definecolor{secondfill}{RGB}{248,176,79}

\tikzset{
 cellgrid/.style={draw=white!72, line width=0.18pt},
 copyborder/.style={draw=black, line width=0.85pt,
                   line cap=round, line join=round},
  copylabel/.style={font=\scriptsize\bfseries, text=black,
                   fill=white, fill opacity=.72, text opacity=1,
                   inner sep=1pt, rounded corners=.5pt}
}

\definecolor{originalblue}{RGB}{0,114,178}
\definecolor{dualvermillion}{RGB}{213,94,0}

\begin{document}
	\title{Unboundedness of the Heesch Number for Hyperbolic Convex Monotiles}
	\author{%
Arun Maiti 
}
\providecommand{\keywords}[1]{\textit{Keywords:} #1}
	\maketitle
	\begin{abstract}
A homogeneous (also known as semi-regular) tiling is an edge-to-edge tiling by regular polygons in which every vertex has the same cyclic type. We resolve the Heesch problem in the hyperbolic plane both for such tilings and for convex monotiles, in the latter case without any edge-to-edge restriction. We also construct an infinite family of weakly aperiodic convex monotiles whose interior angles are rational multiples of $\pi$.
\end{abstract}

	\keywords{hyperbolic tilings, domino problem, homogeneous tiling, semi-regular tilings, Heesch problem}
	
	\section{Introduction}\label{intro}
	The domino problem, a central decision problem in tiling theory, asks whether the integer lattice $\mathbb{Z}^2$ can be tiled with unit-square tiles subject to local color-matching rules. 
	Introduced by Wang in 1961 as a geometric reformulation of satisfiability \cite{HW61}, the problem has since inspired numerous variants and generalizations across diverse settings \cite{J10, JK07}. 
	\par
	In the hyperbolic plane ($\mathbb{H}^2$), an analogue of the problem can be formulated as follows: given a finite collection $\mathcal{F}$ of tiles (polygons with geodesic sides) in $\mathbb{H}^2$, called a \textit{protoset}, together with a tiling rule, is it possible to tile $\mathbb{H}^2$ by isometric copies of the tiles in $\mathcal{F}$? The problem for arbitrary finite protosets was proved to be undecidable independently around the same time by Kari \cite{JK07} and Margenstern \cite{MM08}. The Euclidean analogue was established much earlier by Berger \cite{RB66}. The decidability of the domino problem in any of these settings is closely linked to the existence of aperiodic protosets and the Heesch problem.
	\par
	\subsection{Homogeneous Tilings}
	Here, we consider a constrained variant of the domino problem where protosets consist of an arbitrary finite set of regular polygons together with the tiling rule that the cyclic sequence of polygons around every vertex is the same and that polygons meet edge-to-edge. By duality, this problem is closely related to the domino problem for protosets consisting of a single convex tile (monotile).
\par
	The \textit{type} of a vertex in a tiling is defined to be the cyclic sequence of the \textit{sizes} (number of sides) of the polygons incident to a vertex. A vertex type and its mirror image are considered to be the same. A tiling of a surface is called \textit{homogeneous} (also known as Archimedean and semi-regular) if all vertices have the same type. 
	The \textit{type} of a homogeneous tiling is defined to be the type of its vertices. The \enquote{angle-sum} of a cyclic tuple
	 $\mathfrak{k}=[k_1, k_2, \cdots, k_d]$ with $3 \leq k_i < \infty$ for all $i$, is defined by 
	\[ \vartheta(\mathfrak{k})= \sum_{i=1}^d \frac{k_i-2}{k_i} .\]

For a homogeneous tiling of $\mathbb{H}^2$ of type $\mathfrak{k}$ by regular polygons, the side lengths of the polygons are uniquely (up to isometry) determined by the type $\mathfrak{k}$ whenever $k_i < \infty$ and $\vartheta(\mathfrak{k}) > 2$ \cite{DG18}. This allows us to view homogeneous tilings of $\mathbb{H}^2$ as topological tilings of the plane. Conversely, a homogeneous tiling of the plane of type $\mathfrak{k}$ satisfying $\vartheta(\mathfrak{k}) > 2$ (or $\vartheta(\mathfrak{k}) = 2$) can be realized as a geometric tiling of $\mathbb{H}^2$ (respectively of $\mathbb{E}^2$); see \cite{DG18}.

In light of this, we define the combinatorial construction of such a tiling inductively. The \emph{$0$-layer patch} $X_0$ of type $\mathfrak{k}$ is the configuration obtained by attaching $d$ faces around a distinguished vertex $v_0$ with sizes $k_1, \dots, k_d$ in cyclic order. For $n \ge 1$, the \emph{$n$-layer patch} $X_n$ of type $\mathfrak{k} $ is obtained from $X_{n-1}$ by attaching faces along the boundary $\partial X_{n-1}$ such that the vertex-type at each boundary vertex matches $\mathfrak{k}$.

A cyclic tuple $\mathfrak{k}$ \emph{admits a tiling} if this construction can be extended for all $k \in \mathbb{N}$; the resulting full tiling is given by the union $\mathcal{T} = \bigcup_{k=0}^{\infty} X_k$.

We define the \emph{$n$-th corona} $\mathcal{C}_n$ as the set of faces added at the $n$-th stage
\[
\mathcal C_n:=\{f\in X_n\mid f\notin X_{n-1}\},
\]
and denote its outer boundary by
\(\partial\mathcal C_n=\partial X_n\).

	\subsection{The Heesch Problem} The study of local-to-global obstructions in a tiling is encapsulated by the Heesch number, which quantifies the extent to which a local configuration can be extended before a combinatorial or geometric conflict arises.
\begin{definition}[Heesch Number of a Cyclic Tuple]
For a cyclic tuple $\mathfrak{k}$ with $\vartheta(\mathfrak{k}) > 2$, the \textit{Heesch number} $\mathcal{H}(\mathfrak{k})$ is the maximal non-negative integer $r$ such that $\mathfrak{k}$ admits an $r$-layer patch of type $\mathfrak{k}$. If $\mathfrak{k}$ admits a full tiling $\mathcal{T}$ of the plane, we define $\mathcal{H}(\mathfrak{k}) = \infty$.
\end{definition}
While the cyclic tuple describes the vertex-type constraints, we may also define this property for a single geometric shape (a tile). In this case, the layers are formed by congruent copies of the tile itself.
\begin{definition}[Heesch Number of a Prototile]
The \textit{Heesch number} $\mathcal{H}(T)$ of a prototile $T$ is the maximal non-negative integer $r$ such that there exists a patch $X_r$ consisting of $T$ (the $0$-th corona) completely surrounded by $r$ successive coronas $\mathcal{C}_1, \dots, \mathcal{C}_r$ of tiles congruent to $T$.
\end{definition}
 The \textit{Heesch problem} asks which integers can occur as Heesch numbers for a given class of tiles. The study of Heesch numbers measures how closely finite patches can approximate a tiling of the hyperbolic plane, links finite local patterns to infinite global structures, and deepens our understanding of undecidability in tiling theory. In \cite{AM23}, the author showed that Heesch numbers are bounded for finite sets of regular polygons that do not admit a tiling of $\mathbb{H}^2$. 
\par
This domino problem for homogeneous tilings has been explored by numerous authors \cite{DG18, GS, AM20, DR08}, particularly for cyclic tuples of low degree, though the general problem remains open. While an explicit description of cyclic tuples admitting homogeneous tilings up to degree $6$ was provided in \cite{SS22}, the construction of such tilings is typically restricted to layer-by-layer growth or standard operations on known vertex-transitive tilings. To the author's knowledge, every cyclic tuple currently known to not admit a tiling also fails to admit a patch beyond two layers--implying $\mathcal{H}(\mathfrak{k}) \le 2$. Our investigation reveals that the local extension behavior of homogeneous tilings is substantially richer than previously observed.

\begin{theorem} \label{heeschn} For every positive integer $n \in \mathbb{N}$, there exists a cyclic tuple $\mathfrak{k}_n$ of length $9n^2+25n$ such that $\mathcal{H}(\mathfrak{k}_n) = n$. 
	\end{theorem}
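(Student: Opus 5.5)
We construct $\mathfrak{k}_n$ explicitly so that its vertex constraint behaves like a ``counter'' that is consumed by one unit per corona. The tuple will consist mostly of faces of one very large size $N$ (so that each of them contributes almost $1$ to $\vartheta$, guaranteeing $\vartheta(\mathfrak{k}_n)>2$). Into it we insert a small number of distinguished marker faces of pairwise distinct sizes, arranged in a pattern whose cyclic structure encodes $n$. The length $9n^2+25n$ suggests $O(n)$ marker blocks, each of length $O(n)$, separated by runs of the large faces. We choose the block lengths so that the total is exactly $9n^2+25n$, and we choose the sizes so that every face size in $\mathfrak{k}_n$ determines its position in the cycle up to the mirror symmetry.

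\textbf{Key propagation lemma.} At each boundary vertex of $\partial X_{m}$, the faces already present force a unique way to complete the vertex to type $\mathfrak{k}_n$. The reason is that a contiguous run of at least two faces, together with distinct marker sizes, pins down the rotation and reflection of $\mathfrak{k}_n$. This is the standard rigidity argument: two adjacent faces of distinct sizes $a,b$ occur adjacently only once in $\mathfrak{k}_n$.

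It follows that every $m$-layer patch is unique up to isometry. We then track combinatorially how the relative cyclic offset between consecutive marker faces along each polygon boundary changes from $\mathcal{C}_{m}$ to $\mathcal{C}_{m+1}$. The plan is to arrange that going around a marker face of size $k$ shifts a certain ``gap'' parameter by exactly one per corona. Concretely, for a polygon $P$ of size $k$ appearing in $\mathcal{C}_m$, the vertices of $P$ are each of type $\mathfrak{k}_n$, and consecutive vertices share an edge. So the position of $P$ in the cyclic order at one vertex determines the neighbouring faces across edges at the next vertex. Walking around $P$ therefore induces a permutation-like map on positions in $\mathfrak{k}_n$, and consistency around $P$ requires this map to close up after $k$ steps.

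We design $\mathfrak{k}_n$ so that these closure conditions hold for faces introduced in the first $n$ coronas but fail at layer $n+1$. Failure at layer $n+1$ gives $\mathcal{H}(\mathfrak{k}_n)\le n$. Conversely, writing down the forced patch explicitly and checking that every boundary vertex of $X_{m-1}$, $m\le n$, can be completed gives $\mathcal{H}(\mathfrak{k}_n)\ge n$.

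\textbf{Main obstacle.} The hardest step is the existence part: showing that the $n$-layer patch actually closes up without conflict. Hyperbolic coronas grow exponentially, so one cannot verify this vertex by vertex. I would first try to show that the patch is invariant under a symmetry group acting transitively on each corona's ``types'' of boundary segments. This reduces the verification to finitely many boundary segment types per layer, whose number grows linearly in $n$; that growth presumably explains the quadratic length. An induction on $m$ would then establish that the boundary word of $\partial X_m$ is a concatenation of segments whose induced offsets are $m$. The large faces of size $N$ never close up within $n$ layers, since $N$ exceeds the boundary length involved, so they impose no constraint.

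The nonexistence at layer $n+1$ then reduces to exhibiting one marker polygon whose closure condition forces two incompatible sizes at a single vertex. This should follow once the offset reaches the threshold built into the block lengths.
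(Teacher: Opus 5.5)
Your proposal has a genuine gap. Nothing in it makes the closure condition around a face depend on which corona the face sits in, and the rigidity lemma you put at the centre actually rules out such dependence.

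If a marker size occurs only once in $\mathfrak{k}_n$, with cyclic neighbours $a$ and $b$, then the fans at consecutive vertices of a marker face are forced to be $S,S^{*},S,\dots$. Its edge-neighbours therefore alternate $a,b,a,b,\dots$, and this closure condition is identical in every corona. Such a face closes up always (even size) or never (odd size). In the odd case it already fails at $X_1$, since $X_0$ contains a face of every size. So markers of this kind cannot create an obstruction that first appears at layer $n+1$.

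Your two design requirements also contradict each other. If the tuple is mostly $N$-gons, then $(N,N)$ adjacencies recur. The claims that every size pins down its cyclic position, and hence that every $m$-layer patch is unique, then fail exactly at vertices where only $N$-gons meet.

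Finally, it is false that the $N$-gons ``never close up within $n$ layers.'' Once a face lies in $\mathcal{C}_m$, each of its vertices on $\partial X_m$ must be completed to pass to $X_{m+1}$, so its whole neighbourhood must close at that step. The paper handles these faces with Lemma~\ref{even}. Even faces of size at least $8$ always admit full neighbourhoods by reversing and reflecting fans. Size at least $12$, together with the absence of odd-even-odd subtuples, lets that choice extend over two layers.

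What is missing is the paper's obstruction-propagation mechanism. A finite positive Heesch number needs odd-sided faces that occur many times in the tuple with different neighbours, so that they are flexible. It also needs one forced partial neighbourhood whose effect moves outward one corona at a time.

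The blocks $\tilde{\mathfrak{k}}_i=A_i\oplus B_i\oplus C_i\oplus D_i$ are built for exactly this. By parity, the partial neighbourhood $F_i=(k^{i+1}_1,2i+1,k^{i+1}_1)$ of a $(2i+3)$-gon can only be completed through a segment $(k^{i+1}_1,2i+5,k^{i+1}_2)$. The block $A_{i+1}$ and the overlap $k^i_3=k^{i+1}_1$ then force $F_{i+1}$ around that $(2i+5)$-gon in the next corona. The chain is started by the unique neighbourhood of the triangle in $A_1$. It dies at $F_n$ because $\mathfrak{k}_n$ contains no $(2n+5)$-gon. The length is simply $\sum_{i=1}^n(18i+16)=9n^2+25n$.

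Your ``gap parameter shifted by one per corona'' and ``marker polygon forcing two incompatible sizes'' stand in for this mechanism without realizing it. Since neither the tuple, nor the block lengths, nor the claimed symmetry reduction is specified, neither $\mathcal{H}(\mathfrak{k}_n)\ge n$ nor $\mathcal{H}(\mathfrak{k}_n)\le n$ is actually established.
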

	In the proof (presented in \S \ref{Heesch}) the cyclic tuple $\mathfrak{k}_n$ is built inductively from $\mathfrak{k}_{n-1}$ by adjoining a complementary block $\tilde{\mathfrak{k}}_{n}$. The tuple $\mathfrak{k}_{n-1}$ admits an $n-1$-layer patch and an almost full $n$-layer patch with the sole obstruction around a single odd-sided face. 
	
 The role of this added block $\bar{\mathfrak{k}}_{n}$ is twofold: it resolves the unique local obstruction that appears near an odd-sided face in corona $\mathcal{C}_{n-1}$, while at the same time creating a new forced obstruction around an adjacent odd-sided face one layer farther out. Iterating this local obstruction-propagation mechanism produces cyclic tuples with arbitrarily large finite Heesch number.
	\\
	In~\cite{AT10}, A.~S.~Tarasov constructed hyperbolic monotiles with every prescribed positive Heesch number by adding suitable notches and ridges to regular polygons; the resulting tiles are non-convex. Our passage to convex monotiles uses canonical duality: joining the incentres of consecutive faces  transforms every homogeneous $n$-layer patch into an edge-to-edge $n$-layer patch of tiling by single convex tile. The converse, however, need not hold, particularly when non-edge-to-edge arrangements are permitted. To control such additional configurations, in Theorem~\ref{convexthm}, we replace $\mathfrak{k}_n$ by a larger all-prime tuple $\mathfrak{p}_n$, obtained from the same obstruction-propagation scheme and still satisfying $\mathcal H(\mathfrak{p}_n)=n$. An elementary reciprocal-prime argument forces every completed vertex in a patch by its canonical dual $Q_n$ to consist of corners of a single type and excludes completed $T$-junctions. Consequently, $Q_n$ can admit at most one corona beyond those arising by duality from homogeneous patches. In fact, we prove $\mathcal H(Q_n)=n+1$, where the last corona is non-edge-to-edge. These tiles yields the convex monotiles with any prescribed Heesch number.

\par

\subsection{Aperiodicity in the Hyperbolic Plane}

Periodicity in the hyperbolic plane has two distinct forms. A tiling is \emph{strongly periodic} if the quotient by its symmetry group is compact, and \emph{weakly periodic} if its symmetry group contains an infinite cyclic subgroup~\cite{GS05}. Accordingly, a protoset is called \emph{weakly aperiodic} if it admits no strongly periodic tiling, and \emph{strongly aperiodic} if it admits no weakly periodic tiling. These notions coincide in the Euclidean plane. Aperiodic tile sets play an important role in the domino and periodic domino problems~\cite{B64,MM09,J10}; in particular, Margenstern proved the undecidability of the strongly periodic domino problem for general hyperbolic protosets~\cite{MM09}.

Most classical tilings by regular polygons are vertex-transitive and hence homogeneous~\cite{GS79}. Examples admitting only tilings with several vertex orbits appeared more recently in~\cite{AM20} and in informal notes of Marek \v{C}trn\'{a}ct~\cite{marek}. In~\cite{AM23}, the present author used a double-counting argument involving incidences between triangles and pentagons to obstruct strongly periodic tilings of types $[3,5,k_3,k_4]$ for $10\leq k_3\leq k_4$, with $k_3,k_4\neq11$.

Adapting that argument to homogeneous tilings, we show that the cyclic type
\[
[3,5,k,5,l,5,m,5,l,5,k,5,l,5]
\]
admits a homogeneous tiling but no strongly periodic one whenever $k,l,$ and $m$ are distinct integers at least $6$. This gives an infinite family of weakly aperiodic homogeneous vertex types. When $3,5,k,l,$ and $m$ are distinct primes, the canonical dual is moreover a weakly aperiodic convex monotile, even without an edge-to-edge assumption. Its interior angles, and hence its area, are rational multiples of $\pi$. This contrasts with the convex aperiodic tiles with strictly irrational angles constructed by Margulis and Mozes~\cite{MM98}; in the Euclidean plane, Rao proved that every convex tile admitting a tiling also admits a periodic one~\cite{MR17}. Together with the unboundedness of Heesch numbers established above, these examples suggest that the general domino problem for homogeneous hyperbolic tilings may be undecidable.

\section{Preliminaries} \label{localext}

\begin{definition}[Fan around a Vertex] \label{fan}
Let $P$ be a face in a patch or tiling of type $\mathfrak{k}=[k_1, k_2, \dots, k_d]$ where $P$ corresponds to the size $k_1$, and let $v$ be a vertex of $P$. A \emph{fan} around the vertex $v$ anchored at $P$ is a configuration of $d$ faces, denoted $F_v = (P, f_1, f_2, \dots, f_{d-1})$, which are incident to $v$ and meet edge-to-edge in the clockwise order dictated by the cyclic tuple. 

A \emph{partial fan} around the vertex $v$ anchored at $P$ is a contiguous subsequence of $F_v$ beginning with $P$ (e.g., $(P, f_1, \dots, f_m)$ for $m < d-1$), that can be extended to a full fan around $v$.
\end{definition}

\begin{definition}[Neighborhood of a Face] Let $\mathfrak{k}$ be a cyclic tuple. In a patch or tiling of type $\mathfrak{k}$, the \emph{neighborhood} of a face $P$ is the cyclic configuration of faces incident to the vertices of $P$ such that every vertex in the configuration has type $\mathfrak{k}$.

For homogeneous tiling, the neighborhood of a face is largely determined by the faces sharing edges with the central face. Therefore, for brevity, we will represent the neighborhood of a face solely by the cyclic tuple of its edge-adjacent faces, enclosed in square brackets (e.g., $[a, b, c, \dots]$).
 \end{definition}
 
 \begin{definition}[Partial Neighborhood around a Face]
A \emph{partial neighborhood} of length $m$ around a face $P$ is a configuration obtained by forming fans around $m$ consecutive vertices of $P$. As with neighborhoods, we represent a partial neighborhood solely by the ordered tuple of faces sharing edges with $P$, enclosed in parentheses (e.g., $(a,b,c,\dots)$).

\end{definition}

 \begin{definition}[Local Patch around a Face]
Let $P$ be a face in a patch or tiling. The \emph{$0$-layer patch} around $P$, denoted $X_0(P)$, is defined as the face $P$ itself. Inductively, for $m \ge 1$, the \emph{$m$-layer patch} $X_m(P)$ is the configuration obtained by attaching full neighborhoods to all faces in the boundary of $X_{m-1}(P)$. 
\end{definition}

\subsection{Local Extension Mechanisms} 
Let $\mathfrak{k}=[k_1, k_2, \dots, k_d]$, and $S = (k_1, k_2, \dots, k_d)$ be an ordered sequence representing a fan anchored at a face $P$. The \emph{reflection} of $S$, denoted $S^*$, is the sequence traversed in the reverse orientation: $S^* = (k_1, k_d, k_{d-1}, \dots, k_2)$. 

Let the vertices of $P$ be $v_1, v_2, \dots, v_{k_1}$ in the clockwise direction. One can always construct a valid partial neighborhood of length $2$ around $P$ by placing the fan $S$ at $v_1$ and its reflection $S^*$ at $v_2$. By continuing this alternating assignment--placing $(S^*)^* = S$ at $v_3$, $S^*$ at $v_4$, and so on--one can systematically build a larger partial neighborhood.

If $k_1$ is even, this alternating process closes consistently along the final shared edge $v_{k_1}v_1$, producing a full neighborhood around $P$. However, if $k_1$ is odd, this specific process fails to close consistently. The alternating pattern dictates that the fan at $v_{k_1}$ is $S$, which creates an incidence obstruction because the fan $S$ at $v_{k_1}$ cannot validly share the final edge $v_{k_1}v_1$ with the identical fan $S$ at $v_1$.

The following lemma shows that even-sized faces not only admit full neighborhoods, but their presence also facilitates the formation of layers during the layer-by-layer construction of a homogeneous tiling.

\begin{lemma}\label{even} 
Let $P$ be a face of even size $k$ in the $n$-th corona $\mathcal{C}_n$ of an $n$-layer patch of type $\mathfrak{k}$.
\begin{enumerate}
 \item[(a)] If $k \geq 8$, then $P$ always admits a full neighborhood, regardless of the neighborhoods chosen for the other faces in $\mathcal{C}_n$.
 
 \item[(b)] If $k \geq 12$, $n \geq 1$, and no odd-even-odd subtuple occurs in $\mathfrak{k}$, then this full neighborhood can be chosen such that it extends to a complete $2$-layer patch around $P$.
\end{enumerate}
\end{lemma}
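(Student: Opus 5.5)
The plan is to use the alternating-fan construction just described and show that the constraints imposed by the rest of $\mathcal{C}_n$ are too few to break it. A face $P$ of size $k$ in $\mathcal{C}_n$ meets the inner patch $X_{n-1}$ in a contiguous boundary arc. Suppose $P$ has neighbours in $X_{n-1}$ on at most two consecutive edges (three vertices). I would first prove this geometrically: a homogeneous tiling with $\vartheta(\mathfrak{k})>2$ has strictly negative curvature, so successive coronas are "convex" in the combinatorial sense. Consequently a large face of the outer corona touches $\partial X_{n-1}$ along a path of length at most two. In the same way $P$ shares an edge with at most one face on each side within $\mathcal{C}_n$.

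\textbf{Part (a).} The fans at the vertices of $P$ that lie on the inner arc, or are shared with the two lateral neighbours in $\mathcal{C}_n$, are already fixed. There are at most four such vertices; I will call them the constrained vertices. Each fixed fan is either $S$ or $S^*$ anchored at $P$, up to the rotational choice of where $P$'s entry $k_1$ sits in the cyclic tuple. Starting from a constrained fan, I propagate the alternation $S,S^*,S,\dots$ around $P$. A conflict can occur only where two constrained stretches meet with the wrong parity. At such a junction, the fact that $k\ge 8$ leaves at least four free vertices between the last constrained vertex on one side and the first on the other. There I use the freedom to choose a different occurrence of $k$ in $\mathfrak{k}$ as the anchor. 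Alternatively, the parity can be fixed by the trick $S, S^*, S^*$-compatible edge sharing: two consecutive fans share an edge through a face of the same size, and the edge-sharing rule depends only on that adjacent size. This lets me switch parity across one edge using a face that is repeated in the tuple; if no such face exists, I instead use two switches, which the free vertices accommodate. Since $k$ is even, the global count of switches closes consistently.

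\textbf{Part (b).} Here I extend from the neighbourhood to the second layer. Every new face created around $P$ must itself admit a neighbourhood compatible with its already-fixed fans. The new even faces are handled by (a). The concern is odd faces, which by the discussion above cannot close by pure alternation. The hypothesis that $\mathfrak{k}$ has no odd-even-odd subtuple ensures that the parity defect at an odd face can be absorbed. Around an odd face $Q$ adjacent to $P$, its two neighbours along $P$ are not both forced odd, so one of them is a face (possibly $P$ itself) across which a parity switch is permitted. The bound $k\ge 12$ supplies enough free vertices on $P$ that the choices made for distinct odd faces in the first ring do not interact. Each odd face uses at most three vertices of $P$, so the switches can be placed independently, and the condition $n\ge 1$ guarantees that $P$ already has an inner arc that anchors the choice.

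I expect the main obstacle to be the bookkeeping in (b): checking that local parity switches around several odd faces of the first ring of $P$ can be made simultaneously without creating a new conflict at a vertex shared by two of them. My first approach would be a greedy assignment around $P$, using the twelve or more vertices to separate the switching regions.
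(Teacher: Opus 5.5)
Your argument has a genuine gap at the step that matters in (a): closing the neighbourhood across the free vertices. The constrained vertices form a single run $v_1,\dots,v_4$. This relies on $P$ meeting $\partial\mathcal C_{n-1}$ in one vertex or one edge; your ``two consecutive edges, three vertices'' would give five constrained vertices, not four. The fixed fans $F_1,\dots,F_4$ there need not be $S$ or $S^*$ for a single anchor. When $k$ occurs several times in $\mathfrak{k}$, as in $\mathfrak{k}_n$ where even entries are doubled and shared between blocks, consecutive constrained fans can pass from one occurrence of $k$ to another. Alternating from $F_4$ then keeps putting $F_4$'s pair $d,e$ on the free edges, while the last free fan must also be compatible with the face $a$ fixed by $F_1$. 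That mismatch is not a parity defect, so ``since $k$ is even the switches close'' does not apply. Likewise, ``switch across a repeated face'' or ``choose another occurrence of $k$'' does not say which transition exists, in which orientation, or why it leads back to $F_1$.

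The missing idea is the paper's reversal. Place $F_4^*,F_3^*,F_2^*,F_1^*$ at $v_5,\dots,v_8$, so that $(a,b,c,d,e)$ becomes $[a,b,c,d,e,d,c,b]$. Each mirrored transition is valid because the original one was: the free arc simply retraces the constrained walk $a\to b\to c\to d\to e$ backwards. For $k>8$, pad by reflecting a boundary fan back and forth, which adds vertices two at a time. This is exactly where $k\ge 8=2\cdot 4$ and the evenness of $k$ are used.

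For (b) you do not locate the actual obstruction. In the reversal, the terminal faces $a$ and $e$ are reflection axes. Each receives mirror-image fans at the two ends of its edge with $P$, hence a symmetric partial neighbourhood $(x,P,x)$. An odd face may be unable to complete this; it is the same phenomenon as the blocked pentagon in Proposition~\ref{1butall}. The paper's remedy has four steps:
\begin{itemize}
\item If $a$ or $e$ is odd, first complete its neighbourhood. This extends the partial neighbourhood of $P$ to $(a',a,b,c,d,e,e')$ on six consecutive vertices.
\item With no odd--even--odd subtuple, two consecutive edge-neighbours of the even face $P$ are never both odd; this is stronger than your ``not both forced odd''. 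So $a'$ and $e'$ are even and can serve as the new axes.
\item Reflect this configuration together with the neighbourhoods of $a,\dots,e$. This needs $6+6=12$ vertices and gives every face around $P$ except $a',e'$ a full neighbourhood.
\item Complete $a'$ and $e'$ by (a).
\end{itemize}
Your greedy placement of ``parity switches'', justified by each odd face using at most three vertices of $P$, does not identify this axis problem. It also does not address that the full neighbourhoods of the faces around $P$ must be mutually compatible, which in the paper comes directly from the mirror symmetry.
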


\begin{proof}[Proof of (a)]
	Let $P$ be a face of even size $k \ge 8$ in $\mathcal{C}_n$. $P$ shares either a single vertex or a single edge with the boundary $\partial \mathcal{C}_{n-1}$. This imposes a partial neighborhood for $P$ of length at most $2$; see Fig. \ref{fig:nbdflip8}. Furthermore, for arbitrary choices of neighborhoods for all other faces in $\mathcal{C}_n$, the two adjacent faces of $P$ (within the same corona) can constrain at most one additional fan each at the neighboring vertices of $P$. Consequently, we obtain a partial neighborhood of $P$ of length at most $4$.
	
	We only need to consider the case of length $4$, as a partial neighborhood of length $3$ can always be trivially extended to length $4$. If the size of $P$ is exactly $8$, one can simply reverse the sequence of $4$ fans across the remaining $4$ vertices to construct a full neighborhood around the $8$-gon. For example, a partial neighborhood $(a, b, c, d, e)$ extends to a full cyclic neighborhood $[a, b, c, d, e, d, c, b]$, where the terminal faces $a$ and $e$ act as the axes of reflection. This process is illustrated in Fig. \ref{fig:nbdflip8}, assuming a partial neighborhood consisting of fans around four consecutive vertices $v_1, v_2, v_3,$ and $v_4$.

For faces of larger even sizes, in addition to reversing the sequence of $4$ consecutive fans, one can systematically reflect a single boundary fan (about the boundary face) an even number of times to fill the remaining vertices and close the neighborhood. This is illustrated in Fig. \ref{fig:nbdflip10} for a 10-gon with initial fans at vertices $v_1$, $v_2$, $v_3$, and $v_4$. The partial neighborhood is first reversed about the face $e$, and additionally, the fan at vertex $v_1$ is reflected twice more: first about the face $a$, and then about the face $b$, thereby completing the full neighborhood.

	\begin{figure}[H] 
	\centering
	\begin{minipage}{0.57\textwidth}
 \tikzstyle{ver}=[]
 \tikzstyle{edge} = [draw,thick,-]
 \centering
 	
	\begin{tikzpicture}[
    scale=.4,
    transform shape,
    every node/.style={scale=1.5}
]
		% Define styles
		\tikzstyle{edge}=[thick]
		\tikzstyle{ver}=[font=\small]
		
		% 1. VERTEX COORDINATES
		% Top row (Left to right)
		\coordinate (v1) at (-5, 0);
		\coordinate (v8) at (-3, 0);
		\coordinate (v7) at (-1, 0);
		\coordinate (v6) at (1, 0);
		\coordinate (v5) at (3, 0);
		\coordinate (v4) at (5, 0);
		
		% Middle row
		\coordinate (v2) at (-2.5, -2.5);
		\coordinate (v3) at (2.5, -2.5);
		
		% Bottom row (New vertices on \partial X_{n-1})
		\coordinate (u2) at (-3.5, -5);
		\coordinate (u3) at (3.5, -5);
		
		% 2. HORIZONTAL BOUNDARY LINES & LABELS
		% Topmost boundary line (\partial X_{n+2})
		\draw (-8.5, 2.5) -- (8.5, 2.5) node[right] {$\partial X_{n+1}$};
		
		% The line containing v1 to v4 (\partial X_{n+1})
		\draw (-8.5, 0) -- (8.5, 0) node[right] {$\partial X_{n}$};
		
		% The line containing v2 to v3 (\partial X_n)
		\draw (-8.5, -2.5) -- (8.5, -2.5) node[right] {$\partial X_{n-1}$};
		
		% The bottommost line (\partial X_{n-1})
		\draw (-8.5, -5) -- (8.5, -5) node[right] {$\partial X_{n-2}$};
		
		% 3. POLYGON EDGES (The main body)
		% Top horizontal edges (labels placed below to be inside the octagon)
		\draw[edge] (v1) -- (v8) node[midway, above, yshift=0.5cm] {$a$};
		\draw[edge] (v8) -- (v7) node[midway, above, yshift=0.5cm] {$b$};
		\draw[edge] (v7) -- (v6) node[midway, above, yshift=0.5cm] {$c$};
		\draw[edge] (v6) -- (v5) node[midway, above, yshift=0.5cm] {$d$};
		\draw[edge] (v5) -- (v4) node[midway, above, yshift=0.5cm] {$e$};
		
		% Connecting diagonal sides
		\draw[edge] (v1) -- (v2) node[midway, above left, xshift=-0.9cm] {$b$};
		\draw[edge] (v4) -- (v3) node[midway, above left, xshift=1.5cm] {$d$};
		
		% Bottom horizontal edge
		\draw[edge] (v2) -- (v3) node[midway, below, yshift=-1cm] {$c$};
	
		% 4. SPIKES AND DOTTED CURVES
		% Top 6 spikes (Centers at v1, v8, v7, v6, v5, v4 pointing UPWARD to \partial X_{n+2})
		\foreach \v in {v1, v8, v7, v6, v5, v4} {
			\draw (\v) -- ++(-0.8, 2.5);
			\draw (\v) -- ++(0.8, 2.5);
			\draw[thick, dotted] (\v) ++(-0.4, 1.25) to[bend left=30] ++(0.8, 0);
		}
		
		% Middle 2 spikes (Centers at v2, v3, pointing UPWARD to \partial X_{n+1})
		% For v2
		\draw (v2) -- (-7.5, 0);
		\draw[thick, dotted] (-5, -1.25) to[bend left=30] (-3.75, -1.25);
		
		% For v3
		\draw (v3) -- (7.5, 0);
		\draw[thick, dotted] (3.75, -1.25) to[bend left=30] (5, -1.25);
		
		% Bottom 2 spikes (Centers at u2, u3 on \partial X_{n-1}, pointing UPWARD to \partial X_n)
		% For u2 (connected to v2)
		\draw (u2) -- (v2);
		\draw (u2) -- (-4.5, -2.5);
		\draw[thick, dotted] (-4, -3.75) to[bend left=30] (-3.0, -3.75);
		
		% For u3 (connected to v3)
		\draw (u3) -- (5, -2.5);
		\draw (u3) -- (6.5, -2.5);
		\draw[thick, dotted] (4.3, -3.7) to[bend left=30] (5, -3.75);
		
		% 5. CENTRAL NUMBER
		\node at (0, -1.25) {\Huge \textbf{8}};
		
		% 6. VERTEX LABELS
		\node[below left] at (v1) {$v_1$};
		\node[below] at (v8) {$v_8$};
		\node[below] at (v7) {$v_7$};
		\node[below] at (v6) {$v_6$};
		\node[below] at (v5) {$v_5$};
		\node[below right] at (v4) {$v_4$};
		
		% Shifted v2 and v3 labels slightly so they don't overlap the new lower lines
		\node[below right] at (v2) {$v_2$};
		\node[below left] at (v3) {$v_3$};
		
	\end{tikzpicture}
 	\captionof{figure}{Completing a partial neighborhood of 8-gon}
			\label{fig:nbdflip8}	
\end{minipage}%
\begin{minipage}{0.38\textwidth}
 \tikzstyle{ver}=[]
 \tikzstyle{edge} = [draw,thick,-]
 \centering
 \begin{tikzpicture}[scale=0.6, transform shape] 
 
 % 1. EXACT VERTEX COORDINATES FOR 10-GON
 \coordinate (v9) at (-0.57, 1.76);
 \coordinate (v8) at (0.57, 1.76);
 \coordinate (v7) at (1.50, 1.09);
 \coordinate (v6) at (1.85, 0.00);
 \coordinate (v5) at (1.50, -1.09);
 \coordinate (v4) at (0.57, -1.76);
 \coordinate (v3) at (-0.57, -1.76);
 \coordinate (v2) at (-1.50, -1.09);
 \coordinate (v1) at (-1.85, 0.00);
 \coordinate (v10) at (-1.50, 1.09);

 % 2. THE 10-GON EDGES
 \draw[edge] (v1) -- (v2) -- (v3) -- (v4) -- (v5) -- (v6) -- (v7) -- (v8) -- (v9) -- (v10) -- cycle;

 % 3. SYMMETRICAL WHISKERS (Length = 0.65, Spread = ±28 degrees from radial center)
 \draw[edge] (v9) -- ++(80:0.65); \draw[edge] (v9) -- ++(136:0.65);
 \draw[edge] (v8) -- ++(44:0.65); \draw[edge] (v8) -- ++(100:0.65);
 \draw[edge] (v7) -- ++(8:0.65); \draw[edge] (v7) -- ++(64:0.65);
 \draw[edge] (v6) -- ++(-28:0.65); \draw[edge] (v6) -- ++(28:0.65);
 \draw[edge] (v5) -- ++(-64:0.65); \draw[edge] (v5) -- ++(-8:0.65);
 \draw[edge] (v4) -- ++(-100:0.65); \draw[edge] (v4) -- ++(-44:0.65);
 \draw[edge] (v3) -- ++(-136:0.65); \draw[edge] (v3) -- ++(-80:0.65);
 \draw[edge] (v2) -- ++(188:0.65); \draw[edge] (v2) -- ++(244:0.65); 
 \draw[edge] (v1) -- ++(152:0.65); \draw[edge] (v1) -- ++(208:0.65);
 \draw[edge] (v10) -- ++(116:0.65); \draw[edge] (v10) -- ++(172:0.65);

 % 4. DOTTED CURVES INSIDE THE ANGLES (Radius = 0.5)
 \draw[thick, dotted] (v9) ++(80:0.5) arc (80:136:0.5);
 \draw[thick, dotted] (v8) ++(44:0.5) arc (44:100:0.5);
 \draw[thick, dotted] (v7) ++(8:0.5) arc (8:64:0.5);
 \draw[thick, dotted] (v6) ++(-28:0.5) arc (-28:28:0.5);
 \draw[thick, dotted] (v5) ++(-64:0.5) arc (-64:-8:0.5);
 \draw[thick, dotted] (v4) ++(-100:0.5) arc (-100:-44:0.5);
 \draw[thick, dotted] (v3) ++(-136:0.5) arc (-136:-80:0.5);
 \draw[thick, dotted] (v2) ++(188:0.5) arc (188:244:0.5); 
 \draw[thick, dotted] (v1) ++(152:0.5) arc (152:208:0.5);
 \draw[thick, dotted] (v10) ++(116:0.5) arc (116:172:0.5);

 % 5. CENTRAL NUMBER
 \node[ver] at (0,0){$10$};

 % 6. VERTEX LABELS (Pulled slightly inward to tuck into corners)
 \node[ver] at (-0.46, 1.43){$v_9$};
 \node[ver] at (0.46, 1.43){$v_8$};
 \node[ver] at (1.21, 0.88){$v_7$};
 \node[ver] at (1.50, 0){$v_6$};
 \node[ver] at (1.21, -0.88){$v_5$};
 \node[ver] at (0.46, -1.43){$v_4$};
 \node[ver] at (-0.46, -1.43){$v_3$};
 \node[ver] at (-1.21, -0.88){$v_2$};
 \node[ver] at (-1.50, 0){$v_1$};
 \node[ver] at (-1.21, 0.88){$v_{10}$};

 % 7. EDGE LABELS (Outside the edges, matched to your original pattern)
 \node[ver] at (0, 2.05){$a$}; % Top (v9 to v8)
 \node[ver] at (1.20, 1.66){$b$}; % Top-right (v8 to v7)
 \node[ver] at (1.95, 0.63){$c$}; % Right-top (v7 to v6)
 \node[ver] at (1.95, -0.63){$d$}; % Right-bottom (v6 to v5)
 \node[ver] at (1.20, -1.66){$e$}; % Bottom-right (v5 to v4)
 \node[ver] at (0, -2.05){$d$}; % Bottom (v4 to v3)
 \node[ver] at (-1.20, -1.66){$c$}; % Bottom-left (v3 to v2)
 \node[ver] at (-1.95, -0.63){$b$}; % Left-bottom (v2 to v1)
 \node[ver] at (-1.95, 0.63){$a$}; % Left-top (v1 to v10)
 \node[ver] at (-1.20, 1.66){$b$}; % Top-left (v10 to v9)

 \end{tikzpicture}
 	\captionof{figure}{Completing a partial neighborhood of an even-gon}
			\label{fig:nbdflip10}
\end{minipage}	
\end{figure}

\end{proof}

\begin{proof}[Proof of (b)] To prove the stronger statement for $k \ge 12$, $n \geq 1$, observe that simply reversing the existing partial neighborhood $(a, b, c, d, e)$ of $P$ together with the neighborhoods of faces $ b, c$, and $d$ may induce a partial neighborhood on the boundary faces $a$ and $e$ (which act as the terminal faces for the reversal process) that cannot be extended to a full neighborhood if one or both of them are odd-sided. 

Suppose both $a$ and $e$ are odd-sided faces. We can extend the partial neighborhood $(a, b, c, d, e)$ of $P$ at both ends by one additional fan by forming full neighborhoods around the faces $a$ and $e$. This extends the partial neighborhood of $P$ to the form $(a', a, b, c, d, e, e')$; see Fig. \ref{fig:nbdflip12} for an illustration. Since no odd-even-odd subtuple occurs in $\mathfrak{k}$--meaning $\mathfrak{k}$ does not contain a subtuple of the form $(k_i, k_j, k_l)$ where both $k_i$ and $k_l$ are odd--two odd-sided faces can never appear consecutively in the neighborhood of any even-sided face. Consequently, the new boundary faces $a'$ and $e'$ cannot be odd-sided.

We can then safely use $a'$ and $e'$ as the new terminal faces to initiate the reversal process established above. By reversing this sequence along with the neighbors of $a, b, c, d$ and $e$, we form a full neighborhood of $P$ such that all faces except $a'$ and $e'$ possess full neighborhoods. Lastly, we form full neighborhoods around the even-sided faces $a'$ and $e'$ using the same reversal method. Thus, we successfully construct a complete $2$-layer patch around $P$. Note that this extended partial neighborhood involves $7$ faces spanning $6$ consecutive vertices. Reversing this sequence (and applying further reflections if $k > 14$) requires $P$ to have at least $12$ vertices (i.e., $k \geq 12$).

A similar extension argument applies if only one of $a$ or $e$ is odd-sided. This establishes the required extension to a complete $2$-layer patch around $P$.

\begin{figure}[H] 
 \tikzstyle{ver}=[]
 \tikzstyle{edge} = [draw,thick,-]
 \centering
 \begin{tikzpicture}[scale=0.6, transform shape] 
 
 % 1. EXACT VERTEX COORDINATES FOR 12-GON (Radius = 2.0)
 \coordinate (v10) at (0.00, 2.00);
 \coordinate (v9) at (1.00, 1.73);
 \coordinate (v8) at (1.73, 1.00);
 \coordinate (v7) at (2.00, 0.00);
 \coordinate (v6) at (1.73, -1.00);
 \coordinate (v5) at (1.00, -1.73);
 \coordinate (v4) at (0.00, -2.00);
 \coordinate (v3) at (-1.00, -1.73);
 \coordinate (v2) at (-1.73, -1.00);
 \coordinate (v1) at (-2.00, 0.00);
 \coordinate (v12) at (-1.73, 1.00);
 \coordinate (v11) at (-1.00, 1.73);

 % 2. THE 12-GON EDGES
 \draw[edge] (v1) -- (v2) -- (v3) -- (v4) -- (v5) -- (v6) -- (v7) -- (v8) -- (v9) -- (v10) -- (v11) -- (v12) -- cycle;

 % 3. SYMMETRICAL WHISKERS (Length = 0.65, Spread = \pm 28 degrees from radial center)
 \draw[edge] (v10) -- ++(62:0.65); \draw[edge] (v10) -- ++(118:0.65);
 \draw[edge] (v9) -- ++(32:0.65); \draw[edge] (v9) -- ++(88:0.65);
 \draw[edge] (v8) -- ++(2:0.65); \draw[edge] (v8) -- ++(58:0.65);
 \draw[edge] (v7) -- ++(-28:0.65); \draw[edge] (v7) -- ++(28:0.65);
 \draw[edge] (v6) -- ++(-58:0.65); \draw[edge] (v6) -- ++(-2:0.65);
 \draw[edge] (v5) -- ++(-88:0.65); \draw[edge] (v5) -- ++(-32:0.65);
 \draw[edge] (v4) -- ++(-118:0.65); \draw[edge] (v4) -- ++(-62:0.65);
 \draw[edge] (v3) -- ++(-148:0.65); \draw[edge] (v3) -- ++(-92:0.65);
 \draw[edge] (v2) -- ++(-178:0.65); \draw[edge] (v2) -- ++(-122:0.65);
 \draw[edge] (v1) -- ++(152:0.65); \draw[edge] (v1) -- ++(208:0.65);
 \draw[edge] (v12) -- ++(122:0.65); \draw[edge] (v12) -- ++(178:0.65);
 \draw[edge] (v11) -- ++(92:0.65); \draw[edge] (v11) -- ++(148:0.65);

 % 4. DOTTED CURVES INSIDE THE ANGLES (Radius = 0.5)
 \draw[thick, dotted] (v10) ++(62:0.5) arc (62:118:0.5);
 \draw[thick, dotted] (v9) ++(32:0.5) arc (32:88:0.5);
 \draw[thick, dotted] (v8) ++(2:0.5) arc (2:58:0.5);
 \draw[thick, dotted] (v7) ++(-28:0.5) arc (-28:28:0.5);
 \draw[thick, dotted] (v6) ++(-58:0.5) arc (-58:-2:0.5);
 \draw[thick, dotted] (v5) ++(-88:0.5) arc (-88:-32:0.5);
 \draw[thick, dotted] (v4) ++(-118:0.5) arc (-118:-62:0.5);
 \draw[thick, dotted] (v3) ++(-148:0.5) arc (-148:-92:0.5);
 \draw[thick, dotted] (v2) ++(-178:0.5) arc (-178:-122:0.5);
 \draw[thick, dotted] (v1) ++(152:0.5) arc (152:208:0.5);
 \draw[thick, dotted] (v12) ++(122:0.5) arc (122:178:0.5);
 \draw[thick, dotted] (v11) ++(92:0.5) arc (92:148:0.5);

 % 5. CENTRAL NUMBER
 \node[ver] at (0,0){$12$};

 % 6. VERTEX LABELS (Pulled slightly inward to tuck into corners)
 \node[ver] at (0.00, 1.60){$v_{10}$};
 \node[ver] at (0.80, 1.39){$v_9$};
 \node[ver] at (1.39, 0.80){$v_8$};
 \node[ver] at (1.60, 0.00){$v_7$};
 \node[ver] at (1.39, -0.80){$v_6$};
 \node[ver] at (0.80, -1.39){$v_5$};
 \node[ver] at (0.00, -1.60){$v_4$};
 \node[ver] at (-0.80, -1.39){$v_3$};
 \node[ver] at (-1.39, -0.80){$v_2$};
 \node[ver] at (-1.60, 0.00){$v_1$};
 \node[ver] at (-1.39, 0.80){$v_{12}$};
 \node[ver] at (-0.80, 1.39){$v_{11}$};

 % 7. EDGE LABELS (Outside the edges)
 \node[ver] at (0.58, 2.17){$b$}; % v11 to v10
 \node[ver] at (1.59, 1.59){$c$}; % v10 to v9
 \node[ver] at (2.17, 0.58){$d$}; % v9 to v8
 \node[ver] at (2.17, -0.58){$e$}; % v8 to v7
 \node[ver] at (1.59, -1.59){$e'$}; % v7 to v6
 \node[ver] at (0.58, -2.17){$e$}; % v6 to v5
 \node[ver] at (-0.58, -2.17){$d$}; % v5 to v4
 \node[ver] at (-1.59, -1.59){$c$}; % v4 to v3
 \node[ver] at (-2.17, -0.58){$b$}; % v3 to v2
 \node[ver] at (-2.17, 0.58){$a$}; % v2 to v1
 \node[ver] at (-1.59, 1.59){$a'$}; % v1 to v12
 \node[ver] at (-0.58, 2.17){$a$}; % v12 to v11

 \end{tikzpicture}
 \caption{Completing a partial neighborhood of a 12-gon}
 \label{fig:nbdflip12}
\end{figure}

\end{proof}

\begin{definition}
For an ordered tuple $\mathbf{k} = (k_1,\dots,k_n)$, the \emph{associated cyclic tuple} of $\mathbf{k}$ is denoted by $[\mathbf{k}]=[k_1,\dots,k_n]$.

For two ordered tuples $\mathbf{k}_1=(k_1,\dots,k_m)$ and $\mathbf{k}_2=(l_1,\dots,l_n)$, we define the cyclic tuple
\[\mathbf{k}_1 \oplus \mathbf{k}_2:= \big[k_1,\dots,k_m,l_1,\dots,l_n \big].
\]
\end{definition}

The following proposition, though not used in the proof of our main result, provides a heuristic for forming a cyclic-tuple that admits a larger patch of tiling from a given one. 

\begin{proposition}
Let $\mathfrak{k}$ be a cyclic $d$-tuple such that all but one of the faces in $\mathfrak{k}$ admit a full neighborhood. Then there exists an ordered tuple $\bar{\mathbf{k}}$ and a representative $\mathbf{k}$ of $\mathfrak{k}$ such that the exceptional face admits a neighborhood in $\mathbf{k} \oplus \bar{ \mathbf{k}}$, and every other face admits the same neighborhoods for $\mathbf{k} \oplus \bar{ \mathbf{k}}$ as for $\mathfrak{k}$.
\end{proposition}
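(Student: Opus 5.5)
The plan is to build $\bar{\mathbf{k}}$ directly from the defect that appears when we try to close a neighborhood around the exceptional face. Let $P$ be that face, of size $k$. Choose the representative $\mathbf{k}=(k_1,\dots,k_d)$ of $\mathfrak{k}$ so that the entry attached to $P$ is $k_1=k$. Write $S=(k_1,\dots,k_d)$ for the fan anchored at $P$.

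First, I would run the alternating construction of \S\ref{localext}: place $S$ at $v_1$, $S^*$ at $v_2$, and so on around $P$. Since $P$ admits no full neighborhood (presumably because $k$ is odd), this gives a partial neighborhood of length $k-1$. The only failure is at the closing edge $v_kv_1$. There the face across the edge is prescribed as $k_2$ by the fan at $v_1$ and as $k_d$ by the fan at $v_k$, or equivalently the two fans try to put identical orientations on a shared edge. I would record this mismatch as a single missing vertex configuration: a cyclic sequence of faces around $v_1$ that starts with $P$, continues with the face forced by the fan at $v_k$, and returns to $P$.

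Second, I would define $\bar{\mathbf{k}}$ as the reversed tail $(k_d,k_{d-1},\dots,k_2)$, possibly trimmed or padded by whatever block that missing configuration requires. Then $\mathbf{k}\oplus\bar{\mathbf{k}}$ contains $k_1$ followed on one side by the forward reading of $\mathbf{k}$ and on the other by its reversal. A fan anchored at $P$ in the new tuple can then be read consistently in either orientation, and the parity obstruction at $v_kv_1$ disappears. The fans $S$ and $S^*$ of the old construction become the two halves of one new fan, so the alternating assignment now closes for odd $k$ and $P$ admits a neighborhood.

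Third, I would check that every other face $f\neq P$ keeps its old neighborhood. A fan anchored at $f$ in $\mathbf{k}\oplus\bar{\mathbf{k}}$ contains the old fan anchored at $f$ as a contiguous block. The only change is that the appended block $\bar{\mathbf{k}}$ is inserted at the point of the cyclic order opposite to $k_1$. So the edge-adjacent faces of $f$, which are what the bracket notation for neighborhoods records, are unchanged, and the earlier alternation and reflection arguments (as in Lemma~\ref{even}) go through verbatim for $f$.

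I expect this third step to be the main obstacle. Inserting $\bar{\mathbf{k}}$ can change which faces flank a given face $f$ whenever $f$ is adjacent to the insertion point, namely $k_d$ and, through the reversal, $k_2$. Those faces now have new neighbors inside their fans. I would first try to place the insertion point so that its two neighbors are even-sided. Then the reflection moves of Lemma~\ref{even}(a) absorb the change without affecting the edge-adjacent sequence, and only that special case needs a direct verification.
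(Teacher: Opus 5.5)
Your second step does handle $P$ itself: $[k_1,k_2,\dots,k_d,k_d,k_{d-1},\dots,k_2]$ is symmetric about $k_1$, so $P$ closes as $[k_2,k_2,\dots,k_2]$. The gap is in the third step. It is not a verification left to do, because your construction actually destroys neighborhoods of other faces.

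The block $\bar{\mathbf{k}}$ is not inserted ``opposite'' to $k_1$ but immediately next to it. The consecutive triple $(k_{d-1},k_d,k_1)$ of $\mathfrak{k}$ disappears and is replaced by $(k_{d-1},k_d,k_d)$. So a $k_d$-gon loses an admissible flanking pair, and this can remove all of its neighborhoods. Concretely, take $\mathfrak{k}=[7,5,7,11]$, for which $\vartheta>2$:
\begin{itemize}
\item The fans anchored at the $5$-gon and at the $11$-gon are palindromic, so they close as $[7,7,7,7,7]$ and $[7,\dots,7]$.
\item The $7$-gon is always flanked by $\{5,11\}$, so it is the exceptional face.
\end{itemize}
Anchoring at the first $7$, your tuple is $[7,5,7,11,11,7,5]$. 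Every $11$ is now flanked by $\{7,11\}$, so the edge labels around an $11$-gon would have to alternate around an odd cycle, and the $11$-gon has no neighborhood. Anchoring at the second $7$ gives $[7,11,7,5,5,7,11]$, and the pentagon fails the same way.

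Your remedy cannot repair this. The junctions $k_d\,|\,k_d$ and $k_2\,|\,k_1$ are forced to sit next to $P$, so you do not get to choose the parity of their neighbors. Lemma~\ref{even}(a) only completes partial neighborhoods of even faces inside a corona; it cannot supply a vertex triple that no longer occurs in the tuple. Your justification that the new fan at $f$ contains the old fan as a contiguous block is also false for $f\neq P$: every old fan not starting at $k_1$ passes through the adjacency $k_d,k_1$, which your tuple no longer has.

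The missing idea is to keep every consecutive triple of $\mathfrak{k}$ and to confine all new triples to $k$ and a fresh size. Write $\mathfrak{k}=[\dots,a',a,k,b,b',\dots]$. The paper takes $\mathbf{k}=(b,b',\dots,a',a,k)$, and $\bar{\mathbf{k}}=(c,k)$ followed by a full copy of $\mathbf{k}$, where $c\notin\mathfrak{k}$. This gives $\mathbf{k}\oplus\bar{\mathbf{k}}=[b,\dots,a,k,c,k,b,\dots,a,k]$.
\begin{itemize}
\item Every old triple survives; in particular $(a,k,b)$ survives at the cyclic wrap-around.
\item The only new triples are $(a,k,c)$, $(k,c,k)$ and $(c,k,b)$.
\item The single $c$ absorbs the parity defect, so the $k$-gon closes as $[a,b,a,b,\dots,b,c]$.
\end{itemize}
In short, your symmetrization resolves $P$ by removing an adjacency. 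The paper's duplication plus a fresh symbol resolves $P$ while only adding adjacencies, which is exactly what ``every other face keeps its neighborhoods'' requires.
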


\begin{proof} 
Suppose $\mathfrak{k}$ contains a $k$-gon in the order $[ \dots, a', a, k, b, b' \dots]$ that does not admit a full neighborhood. This $k$-gon must necessarily be an odd-sided face, and it admits a partial neighborhood of length $d-1$:
 \[
[a, b, a, b, \dots, a]
\]
Consider the representative $\mathbf{k}=( b, b' \dots, a', a, k)$ of $\mathfrak{k}$ and the tuple $\bar{\mathbf{k}}= ( c, k, \underbrace{ b, b', \dots, a, k}_{\mathbf{k}})$, and the cyclic tuple
\[ \mathbf{k} \oplus \bar{ \mathbf{k}}= [ b, b' \dots, a', a, k, c, k, b, b', \dots, a, k]\]

 where $c \notin \mathfrak{k}$. 
 \\
 Then the $k$-gon admits a complete neighborhood of the form 
 \[ [a, b, a, b, \dots, b, c] \]
 
For every other face, the adjacencies are the same in $\mathbf{k} \oplus \bar{ \mathbf{k}}$ as for $\mathfrak{k}$, consequently, the last part of the statement of the proposition holds.

\end{proof}

\section{Cyclic Tuples with Arbitrarily Large Heesch Numbers} \label{Heesch}

The observations made above serve as a guiding principle in the proof of the main theorem (Theorem \ref{heeschn} in \S \ref{intro}). First, we illustrate how these principles allow us to construct an explicit example of a cyclic tuple with Heesch number $1$.

\subsection{Base Example}
\begin{proposition} \label{1butall}
There exists a cyclic tuple that admits a complete $1$-layer patch and a partial $2$-layer patch consisting of full neighborhoods around all but one face in the first corona. In particular, the cyclic tuple has Heesch number $\mathcal H(\mathfrak{k}) = 1$.
	
	\end{proposition}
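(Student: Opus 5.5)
I will prove the proposition by exhibiting an explicit cyclic tuple $\mathfrak{k}_1$ and verifying two things. First, a complete $1$-layer patch $X_1$ exists. Second, in every $1$-layer patch there is an odd-sided face of $\mathcal C_1$ whose neighborhood cannot be completed, while all other faces of $\mathcal C_1$ can be completed.

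The tuple will consist of a single odd entry $p$ (say $p=5$ or $7$) surrounded by large even entries $\ge 12$. All other odd entries will be distinct and large, and odd-even-odd subtuples will be avoided. Then Lemma~\ref{even}(a) gives full neighborhoods for every even face regardless of the other choices. Because each odd size occurs only once, the neighbors of an odd $q$-gon are forced: every vertex of it carries the same two neighbors $a,b$. The alternating reflection argument from \S\ref{localext} then yields the partial neighborhood $[a,b,a,b,\dots,a]$, which fails to close.

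To give the central patch some room, I will follow the heuristic of the Proposition preceding \S\ref{Heesch}. Let $\mathbf{k}$ be a tuple in which the odd face $k$ has no full neighborhood. I will pass to $\mathbf{k}\oplus\bar{\mathbf{k}}$ with $\bar{\mathbf{k}}=(c,k,\mathbf{k})$, so that $k$ occurs twice, with two different neighbor pairs.

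\textbf{Step 1 (existence of $X_1$).} Around $v_0$ the fan is the tuple itself, and each face of $X_0$ must receive a full neighborhood. For the repeated odd $k$-gon this works: it can use the neighborhood $[a,b,\dots,b,c]$, which closes. For the remaining odd entries I will place them in $X_0$ so that their neighborhoods are never needed. If needed, I will simply check directly that each face of $X_0$ closes using the reflection and flip moves. This is where the concrete choice of numbers matters.

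\textbf{Step 2 (obstruction in $\mathcal C_1$).} At least one face of $\mathcal C_1$ is a copy of an odd face $q$ that appears only once in the tuple. Its forced alternating neighborhood cannot close, so no $2$-layer patch exists. The argument must cover every possible $X_1$, not just the one constructed in Step 1. I will therefore argue that the tuple forces such a $q$-gon into $\mathcal C_1$: every vertex of $\partial X_0$ has a fan containing all $d$ entries, so a $q$-gon necessarily appears in $\mathcal C_1$.

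\textbf{Step 3 (all other faces of $\mathcal C_1$ close).} The remaining faces are handled as follows:
\begin{itemize}
\item even faces close by Lemma~\ref{even}(a), provided their sizes are at least $8$;
\item the doubly occurring odd face closes as in Step 1.
\end{itemize}
I must also check that these completions can be chosen compatibly along shared vertices. I plan to handle this by completing neighborhoods one face at a time around the corona, using that Lemma~\ref{even}(a) holds regardless of neighboring choices. I then place the single exceptional face last, or keep it isolated by its position.

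\textbf{Main obstacle.} I expect the hardest part to be the "unique face" claim, that is, showing that exactly one face fails. If the tuple contains $q$ once, then every vertex of $\partial X_0$ spawns a $q$-gon, which would give many failing faces rather than one.

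To get exactly one, I will make the obstructed odd size occur only where it is forced by the single position adjacent to the repeated $k$. The layout I will try first is this: the odd size $q$ appears in the tuple only inside the fan of the special $k$-gon. Other boundary vertices place a $q$-gon in a position where it shares a vertex with an earlier face. That $q$-gon is then completed using the alternative neighbor pair coming from the duplication, so only the one $q$-gon adjacent to the $k$-face in $\mathcal C_1$ lacks freedom. I will verify this by direct inspection of the concrete tuple, together with a figure.
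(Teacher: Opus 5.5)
There is a genuine gap, and it lies in the obstruction mechanism itself. In Step~2 you rely on an odd $q$-gon whose size occurs only once in the tuple, with neighbors $a\neq b$. Any neighborhood of it is then forced to alternate $a,b,a,b,\dots$ and cannot close.

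But such a face can never be completed anywhere, in particular not in $X_0$. The fan at $v_0$ contains every entry of the tuple, so $X_0$ contains a $q$-gon. Every vertex of that $q$-gon other than $v_0$ lies on $\partial X_0$ and must receive a full fan when $X_1$ is built. So any tuple of the kind you describe has $\mathcal H=0$, and Step~1 fails.

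Your plan to place the remaining odd entries in $X_0$ ``so that their neighborhoods are never needed'' is impossible for the same reason: constructing $X_1$ means giving every face of $X_0$ a full neighborhood. The repair in your last paragraph (completing other $q$-gons with an ``alternative neighbor pair'') contradicts the premise of Step~2. Once $q$ has two different neighbor pairs, the alternating argument is gone, and you have no remaining reason why any particular $q$-gon of $\mathcal C_1$ fails. You also never produce a concrete tuple, which for this existence statement is the whole content of the proof.

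The missing idea is that the obstruction must be context-dependent. Every face type must admit a full neighborhood when only the single fan at $v_0$ is prescribed. Yet some face of $\mathcal C_1$ must fail because it shares an \emph{edge} with $\partial X_0$, and so inherits two prescribed fans from an already completed face of $X_0$.

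The paper realizes this with $\tilde{\mathfrak k}_1$. The triangle between $8$ and $10$ has the unique neighborhood $[5,8,10]$, which forces the partial neighborhood $(12,3,12)$ on the pentagon across its outer edge. The admissible neighbor pairs of a pentagon, $\{3,12\},\{12,22\},\{22,20\},\{20,18\},\{18,3\}$, form a $5$-cycle. Hence no walk of length $3$ returns from $12$ to $12$, and this pentagon cannot close. By contrast, a pentagon of $X_0$ closes as $[3,18,20,22,12]$.

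Uniqueness of the failing face comes from checking that the other forced configurations close, e.g.\ $(12,3,18)$ from the triangles between $8,14$ and between $16,10$. Compatibility is handled by completing odd faces first and then even faces via Lemma~\ref{even}, after passing to $\mathfrak k_1$ to eliminate odd-even-odd subtuples.
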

	\begin{proof}
We consider the cyclic tuple $\tilde{\mathfrak{k}}_1 $, presented with an index set for clarity:

\[
\left[
\begin{array}{cccccccccccccccccc}
	a_1 & a_2 & a_3 & a_4 & a_5 & a_6 & a_7 & a_8 & a_9 & a_{10} & a_{11} & a_{12} & a_{13} & a_{14} \\
	[3pt]
	\hspace{0.3em}8 & 3 & 10 & 3 & 5 & 12 & 5 & 3 &
	8 & 3 & 14 & 3 & 5 & 18 \\
	[6pt]
	 a_{15} & a_{16} & a_{17} &
	a_{18} & a_{19} & a_{20} & a_{21} & a_{22} & a_{23} & a_{24} & a_{25} & a_{26} \\
	[3pt]
	\hspace{0.3em} 
 5 & 3 & 16 & 3 & 10 & 18 & 5 & 20 & 5 & 22 & 5 & 12
\end{array}
\right]
\]

By Lemma \ref{even}, every even-sided face always admits a full neighborhood regardless of how we form the neighborhoods around the odd-sided faces. Therefore, to prove that $\tilde{\mathfrak{k}}_1 $ admits a complete $1$-layer patch $X_1$, it suffices to verify that full neighborhoods can be formed around every sequence of odd-sided faces situated between consecutive even-sided faces in $\mathcal{C}_0=X_0$.

We exhaustively list these odd-sided face sequences in $\mathcal{C}_0$ and their corresponding valid full neighborhoods below:

\begin{table}[H]
 \centering
 \begin{tabular}{|c|c|}
 \hline
 \textbf{Location of triangles in $\tilde{\mathfrak{k}}_1$ with no adjacent pentagon} & \textbf{Neighborhood} \\
 \hline
 $a_2 - (8, 3, 10)$ & $[5, 8, 10]$ \\
 \hline
 $a_{10} -(8, 3, 14)$ & $[5, 8, 14]$ \\
 \hline
 $a_{18} - (16, 3, 10)$ & $[5, 10, 16]$ \\
 \hline
\end{tabular}
 \caption{Isolated triangles and their neighborhoods}
 \label{tab:N3}
\end{table}

\begin{table}[H]
 \centering
 \begin{tabular}{|c|c|}
 \hline
 \textbf{Locations of pentagon in $\tilde{\mathfrak{k}}_1$ with no adjacent triangle} & \textbf{Neighborhood} \\
 \hline
 $ a_{21} - (18, 5, 20), a_{23} - (20, 5, 22), a_{25} - (22, 5, 12)$ & $ [ 3, 18, 20, 22, 12]$ \\
 \hline
 \end{tabular}
 \caption{Neighborhoods of isolated pentagons (see Figure~\ref{fig:nbd35})}
 \label{tab:N5}
\end{table}

\begin{table}[H]
 \centering
\begin{tabular}{|c|c|}
 \hline
 \textbf{Locations of (triangle, pentagon) pair in $\tilde{\mathfrak{k}}_1$} & \textbf{Neighborhoods for the pair} \\
 \hline
 $(a_4, a_5) - (10, 3, 5, 12)$ & $[8, 10, 5], [3, 18, 20, 22, 12]$ \\
 \hline
 $(a_7, a_8) - (12, 5, 3, 8)$ & $[8, 10, 5], [3, 18, 20, 22, 12]$ \\
 \hline
 $(a_{12}, a_{13}) - (14, 3, 5, 18)$ & $[10, 16, 5], [3, 18, 20, 22, 12]$ \\
 \hline
 $(a_{15}, a_{16}) - (18, 5, 3, 16)$ & $[8, 14, 5], [3, 18, 20, 22, 12]$ \\
 \hline
\end{tabular}
\caption{Joint neighborhoods of (triangle, pentagon) pairs (see Figure~\ref{fig:nbd35})}
\label{tab:N35}
\end{table}
 
 		\begin{figure}[H]
 \centering	
		
		\begin{minipage}{0.48\textwidth}
 \centering
 % Tip: Use width=\textwidth instead of scale to make the image perfectly fit the subfigure box
	\begin{tikzpicture}[scale=0.5, transform shape, line cap=round, line join=round]
		% Define inner and outer radii
		\def\rin{4}
		\def\rout{6.5}
		
		% Define styles for lines, text, and the dotted angle arcs
		\tikzset{
			thickline/.style={line width=1 pt},
			innernum/.style={font=\Huge\bfseries\sffamily},
			outernum/.style={font=\LARGE\bfseries\sffamily},
			outernumlarge/.style={font=\fontsize{45}{54}\bfseries\sffamily}, % Extra large for 6
			% Reduced angle radius to bring the dotted arcs closer to the vertices
			anglearc/.style={draw, dash pattern=on 1pt off 2.5pt, line width=1 pt, angle radius=2cm}
		}
		
		% Draw the main inner and outer arcs (open-ended)
		\draw[thickline] (15:\rin) arc (15:165:\rin);
		\draw[thickline] (15:\rout) arc (15:165:\rout);
		
		% 1. Define the 5 corner vertices on the inner arc
		\coordinate (V1) at (75:\rin); % Anchored inside region 5
		\coordinate (V2) at (45:\rin); % Anchored at border of 10 and 5
		\coordinate (V3) at (62:\rin); % Restored vertex inside region 5
		\coordinate (V4) at (95:\rin); % Anchored at border of 5 and 3
		\coordinate (V5) at (130:\rin); % Anchored at border of 3 and 8
		
		% Draw the 3 internal radial lines separating the main numbered regions
		% This makes region 5 (50 degrees) proportionally larger than region 3 (35 degrees)
		\coordinate (Origin) at (0,0);
		\draw[thickline] (Origin) -- (V2); % Border 10/5
		\draw[thickline] (Origin) -- (V4); % Border 5/3
		\draw[thickline] (Origin) -- (V5); % Border 3/8
		
		% 2. Define outer points to create the 5 narrow "V" shapes (approx 10 degrees wide)
		\coordinate (O1R) at (70:\rout); \coordinate (O1L) at (80:\rout);
		\coordinate (O2R) at (40:\rout); \coordinate (O2L) at (50:\rout);
		\coordinate (O3R) at (55:\rout); \coordinate (O3L) at (65:\rout);
		\coordinate (O4R) at (90:\rout); \coordinate (O4L) at (100:\rout);
		\coordinate (O5R) at (125:\rout); \coordinate (O5L) at (135:\rout);
		
		% 3. Draw the "V" shapes radiating to the outer arc
		\draw[thickline] (V1) -- (O1R); \draw[thickline] (V1) -- (O1L);
		\draw[thickline] (V2) -- (O2R); \draw[thickline] (V2) -- (O2L);
		\draw[thickline] (V3) -- (O3R); \draw[thickline] (V3) -- (O3L);
		\draw[thickline] (V4) -- (O4R); \draw[thickline] (V4) -- (O4L);
		\draw[thickline] (V5) -- (O5R); \draw[thickline] (V5) -- (O5L);
		
		% 4. Draw the dotted angle arcs INSIDE the V-shapes
		\pic [anglearc] {angle = O1R--V1--O1L};
		\pic [anglearc] {angle = O2R--V2--O2L};
		\pic [anglearc] {angle = O3R--V3--O3L};
		\pic [anglearc] {angle = O4R--V4--O4L};
		\pic [anglearc] {angle = O5R--V5--O5L};
		
		% 5. Place Inner Labels
		% Centered perfectly within their radial blocks
		\node[innernum] at (30:2.6) {12};
		\node[innernum] at (70:2.6) {5};
		\node[innernum] at (112.5:2.6) {3};
		\node[innernum] at (147.5:2.6) {$j$};
		
		% 6. Place Outer Labels
		% These sit in the wider gaps *between* the V-shapes
		\node[outernum] at (53:5) {22};
		\node[outernum] at (68:5) {20};
		\node[outernum] at (85:5) {18};
		
		% Place the large '6' in the final large gap above region 3
		\node[outernum] at (112.5:5) {$l$};
		
	\end{tikzpicture}
	
 \caption{Neighborhoods of triangle-pentagon pairs for $(j,l)=(8,14),(10,16),(14,16),(16,14)$ }
 \label{fig:nbd35} 
 \end{minipage}
 \hfill
 \begin{minipage}{0.50\textwidth}
 \centering
	\begin{tikzpicture}[scale=0.4, transform shape, nodes={scale=1.5}]
	% Define styles
	\tikzstyle{edge}=[thick]
	\tikzstyle{ver}=[font=\small]
	
	% 1. VERTEX COORDINATES
	% Center of curvature is at (0, -20). 
	
	% Top row (3 vertices for the pentagon) lying on arc of R=20
	\coordinate (v1) at (-3, -0.23);
	\coordinate (v5) at (0, 0);
	\coordinate (v4) at (3, -0.23);
	
	% Middle row lying on arc of R=17.5
	\coordinate (v2) at (-1.5, -2.56);
	\coordinate (v3) at (1.5, -2.56);
	
	% Bottom vertex for the new equilateral triangle
	\coordinate (v0) at (0, -5.16); 
	
	% 2. HORIZONTAL BOUNDARY LINES & LABELS 
	
	% The line containing top of pentagon (\partial X_{n+1}), R=20
	% Spans strictly from x = -5.5 to x = 5.5 (Medium)
	\draw (-5.5, -0.77) arc (105.95:74.05:20) node[right] {$\bf{\partial X_{2}}$};
	
	% The line containing middle vertices (\partial X_n), R=17.5
	% Spans strictly from x = -4.5 to x = 4.5 (Shortest)
	\draw (-4.5, -3.09) arc (104.93:75.07:17.5) node[right] {$\bf{\partial X_1}$};
	
	% 3. POLYGON EDGES
	% Top horizontal edges of pentagon (drawn as arcs to match the boundary)

	% Connecting diagonal sides of pentagon
	\draw[edge] (v1) -- (v2) node[midway, left] {$\textbf{12}$};
	\draw[edge] (v4) -- (v3) node[midway, right] {$\textbf{12}$};
	\node at (-1.5, -3.6) {\textbf{8}};
	\node at (1.5, -3.6) {\textbf{10}};
	
	% Bottom horizontal edge of pentagon (drawn as arc to match boundary)
	\draw[edge] (v2) arc (94.92:85.08:17.5);
	\node at (0, -3.6) {\Huge \textbf{3}};
	
	% Downward Equilateral Triangle edges
	\draw[edge] (v2) -- (v0);
	\draw[edge] (v3) -- (v0);
	
	% 4. SPIKES AND DOTTED CURVES
	% Top 3 spikes (Shorter, and upright relative to the normal of the curved base)
	
	% For v5 (Center, tangent is horizontal, normal is 90 deg)
	\draw (v5) -- ++(115:1.5);
	\draw (v5) -- ++(65:1.5);
	\draw[thick, dotted] (v5) ++(65:0.8) arc (65:115:0.8);

	% For v1 (Left, base is curved, normal is ~98.6 deg)
	% Spread is +/- 25 degrees from the normal
	\draw (v1) -- ++(123.6:1.5);
	\draw (v1) -- ++(73.6:1.5);
	\draw[thick, dotted] (v1) ++(73.6:0.8) arc (73.6:123.6:0.8);

	% For v4 (Right, base is curved, normal is ~81.4 deg)
	% Spread is +/- 25 degrees from the normal
	\draw (v4) -- ++(106.4:1.5);
	\draw (v4) -- ++(56.4:1.5);
	\draw[thick, dotted] (v4) ++(56.4:0.8) arc (56.4:106.4:0.8);

	% Middle 2 spikes (pointing out to \partial X_{n+1})
	\draw (v2) -- (-5, -0.64);
	\draw (v3) -- (5, -0.64);
	
	% Bottom-most spikes connecting to \partial X_n
	\draw (v0) -- (-3.5, -2.85);
	\draw (v0) -- (3.5, -2.85);
	
	% 5. CENTRAL NUMBER
	\node at (0, -1.3) {\Huge \textbf{5}};
\end{tikzpicture}
 \caption{Forced blocking partial neighborhood of a $5$-gon in the second layer}
 \label{around3}
 \end{minipage}

\end{figure}

This exhaustively verifies all possible sequences of odd-sided faces between any two even-sided faces. Consequently, a complete $1$-layer ($X_1$) patch of type $\tilde{\mathfrak{k}}_1$ exists.

Next, we analyze the first corona $\mathcal{C}_1$. Note that the unique neighborhood that the triangle at position $a_2$ in $\mathcal{C}_0$ possesses forces the specific partial neighborhood $(12, 3, 12)$ onto the adjacent pentagon in $\mathcal{C}_1$; see Figure~\ref{around3}. Due to the parity constraint, this specific partial neighborhood cannot be extended into a full neighborhood of the pentagon. Therefore, $\tilde{\mathfrak{k}}_1$ does not admit a complete second corona $\mathcal{C}_2$; consequently, 

 \[\mathcal{H}(\tilde{\mathfrak{k}}_1)=1.\]

To achieve the stronger property in the proposition---a partial $2$-layer patch where \textit{all but one} face in the first corona possesses a full neighborhood---we employ a similar strategy: we prioritize forming neighborhoods around odd-sided sequences in $\mathcal{C}_1$ before addressing the even-sided faces. For the even-sided faces in $\mathcal{C}_0$, it is convenient to adapt the methodology provided in Lemma \ref{even} (b), which was originally applied for even-sided faces in $\mathcal{C}_i$ with $i \geq 1$ though. To facilitate this, we construct a modified cyclic tuple $\mathfrak{k}_1$ from $\tilde{\mathfrak{k}}_1$ by replacing each isolated even-sided face with two identical copies and increasing their respective sizes by $4$. This modification yields the following cyclic tuple:

\begin{equation} \label{k1}
\mathfrak{k}_1=A_1\oplus B_1\oplus C_1\oplus D_1,
\end{equation}
where
\[
\begin{aligned}
A_1&=(12,3,14^{\times 2},3,5,16^{\times 2},5,3,12), \quad B_1 =(12,3,18^{\times 2},3,5,22),\\
C_1&=(14,3,20^{\times 2},3,5,22), \quad D_1=(22,5,24^{\times 2},5,26^{\times 2},5,16).
\end{aligned}
\]

 \textbf{Construction of $\mathcal{C}_1$:} Observe that the number of the odd-sided faces and their immediate adjacencies remain the same in $\mathfrak{k}_1$ as it was in $\tilde{\mathfrak{k}}_1$. Consequently, the neighborhoods for any sequence of odd-sided faces bounded by even-sided faces in $\mathcal{C}_0$ can be constructed using exactly the same neighborhoods for $\tilde{\mathfrak{k}}_1$. 
\\
Furthermore, no odd-even-odd subtuple occurs in $\mathfrak{k}_1$. This allows us to form the neighborhoods around the even-sided faces in $\mathcal{C}_0$ by the reversal method of Lemma~\ref{even}(b), more precisely by its partial two-layer version.
 
\textbf{Construction of an incomplete (all but one) $\mathcal{C}_2$:} We now proceed to construct the neighborhoods for the faces in the first corona $\mathcal{C}_1$, addressing the odd-sided faces first. The neighborhoods for odd-sided faces in $\mathcal{C}_1$ that are adjacent to even-sided faces in $\mathcal{C}_0$ have already been determined. For odd-sided faces in $\mathcal{C}_1$ that are adjacent to odd-sided faces in $\mathcal{C}_0$, we have two cases: 
\\
\textbf{Case A.} For isolated odd-sided faces in $\mathcal{C}_1$ that share a single vertex with the boundary $\partial \mathcal{C}_1$, its full neighborhoods can be constructed using the same neighborhoods used in $0$-th corona. The same applies to a pair of odd-sided faces each sharing single vertices with the boundary $\partial \mathcal{C}_1$. Because odd-sided faces in the cyclic tuple appear either isolated or in pairs, an odd-sided face that shares an edge with $\partial \mathcal{C}_1$ must either be isolated or belong to a pair of odd-sided faces. 
\\
\textbf{Case B.} For odd-sided faces (either isolated or in pairs) that share single edge with $\partial \mathcal{C}_1$, we first consider the pentagon in $\mathcal{C}_2$ that appears in the neighborhood of an isolated triangle in $\mathcal{C}_0$. This occurs for triangles in the positions $a_2$, $a_{10}$ and $a_{16}$ (see Table \ref{tab:N3}). The first case of $a_2$ creates the blocking we have already discussed above. In both the remaining cases, we form the neighborhood $[3, 12, 18, 20, 22]$ around the pentagon which is identical to the one formed in $\mathcal{C}_0$, as shown in Fig. \ref{fig:nbd35}. An analogous construction works for triangles in the neighborhood of the isolated pentagons in $\mathcal{C}_0$ at positions $a_{21}$, $a_{23}$ and $a_{25}$ (see Table \ref{tab:N5}). By our construction, the edge-adjacent neighbors of a pair of odd-sided faces in $\mathcal{C}_1$ are all even-sided faces. This completes the neighborhood construction of the neighborhoods of the odd-sided faces in $\mathcal{C}_1$.
\\
For the even-sided faces in $\mathcal{C}_1$, we can appeal again to the first part of the Lemma \ref{even}. 
\\
Thus all the faces in $\mathcal{C}_1$ admit a joint neighborhood with the sole exception of the unique pentagon attached to the triangle in $\mathcal{C}_0$ at position $a_2$. 
This completes the proof.

\end{proof}

\subsection{Proof of Theorem~\ref{heeschn}}

	To construct a cyclic tuple with a Heesch number of $2$, we strategically extend the cyclic tuple $\mathfrak{k}_1$. This extension is designed to resolve the combinatorial obstruction surrounding the pentagon in the second layer while simultaneously introducing a new, strictly forced obstruction around a different odd-sided face in the third layer. By iterating this localized blocking mechanism, we can generate cyclic tuples possessing arbitrarily high Heesch numbers.

\begin{proof}

For a finite sequence of integers $a_1,\dots,a_m$ and an integer $\alpha$, let
$$ \mathcal{J}_\alpha(a_1,\dots,a_m) :=\left(a_1,\alpha,(a_2)^{\times 2},\alpha,\dots,\alpha,(a_{m-1})^{\times 2}, \alpha, a_m\right). $$

For $i \geq 1$, define $$\tilde{\mathfrak{k}}_i = A_i \oplus B_i \oplus C_i \oplus D_i,$$ where

 \[
\begin{aligned} 
A_i &:= \left(k^i_1, 2i+1, (k^i_2)^{\times 2}, 2i+1, 2i+3, (k^i_3)^{\times 2}, 2i+3, 2i+1, k^i_1\right),\\ 
B_i &:= \left(\mathcal{J}_{2i+1}(k^i_1, k^i_5, k^i_6,\dots,k^i_{2i+3}, 2i+3), k^i_4\right)\\ 
&= (k^i_1, 2i+1, (k^i_5)^{\times 2}, 2i+1, \dots, 2i+1, (k^i_{2i+3})^{\times 2}, 2i+1, 2i+3, k^i_{4}),\\ 
C_i &:= \left(\mathcal{J}_{2i+1}(k^i_2, k^i_{2i+4},\dots,k^i_{4i+2}, 2i+3), k^i_4\right)\\
&=(k^i_2, 2i+1, (k^i_{2i+4})^{\times 2}, 2i+1, \dots, 2i+1, (k^i_{4i+2})^{\times 2}, 2i+1, 2i+3, k^i_4), \\
D_i &:= \mathcal{J}_{2i+3}(k^i_4, k^i_{4i+3},\dots,k^i_{6i+2}, k^i_3)\\
&=(k^i_4, 2i+3, (k^i_{4i+3})^{\times 2}, 2i+3, \dots, 2i+3, (k^i_{6i+2})^{\times 2}, 2i+3, k^i_3).
\end{aligned} 
\]

Here, $k^i_j \geq 12$ are even integers for all $j \geq 1$,  pairwise distinct for all $i, j$ except for the overlap condition: 
\[
 k^i_3= k^{i+1}_1 \quad \text{for } i \ge 1.
\]
Such a choice of integers is always possible.

For $n \ge 2$, we define the cyclic tuple
\[
\mathfrak{k}_n = \bigoplus_{i=1}^n \tilde{\mathfrak{k}}_i.
\]

We claim that $\mathfrak{k}_n$ has a Heesch number of $n$. Observe that for the base case $i=1$, the following assignment
\[ (k_1^1,\ldots,k_8^1)=(12,14,16,22,18,20,24,26) \]

yields precisely the cyclic tuple $\mathfrak{k}_1$ described in Eq. \ref{k1} in Proposition \ref{1butall} which we have already established has Heesch number $1$. Hence, the n=1 case of the theorem follows from Proposition \ref{1butall}.

 We build the successive coronas by iterating the local extension rules and verify the construction by induction on the corona index. During the construction process, the partial neighbourhoods induced on a new outer corona have two different behaviours. Most belong to recurrent local
configurations: whenever one of these configurations occurs, it can be completed by one of the same neighbourhood constructions used in the
preceding coronas.  In addition, there is one distinguished sequence of partial neighbourhoods of certain odd-sided faces that changes from one corona to the next.  The $(r+1)$-th corona is formed by first prioritizing the odd-sided faces in $r$-th corona, followed by the even-sided faces.

\textbf{Neighborhood formation of odd faces.} Note that the odd-sided faces situated between two consecutive even-sided faces in $\mathfrak{k}_n$ must appear either isolated or in pairs of the form $((2i+1), (2i+3))$. We systematically divide the neighborhood construction around the odd faces according to the nature of their intersecttion with the boundary of the preceding corona.

\textbf{Case A.} We first address the scenario in which the odd-sided faces in $X_k$--whether isolated or appearing in pairs--share only single vertices with the boundary of the preceding corona $\partial \mathcal{C}_{k-1}$. This condition holds inherently for the zeroth corona. For subsequent coronas, the neighborhoods can be constructed identically to those in the zeroth corona. All possible positions of the odd-sided faces in any corona in the iterative construction and the corresponding valid neighborhood formations are detailed below:

\begin{enumerate}
\item
Let $\arg_X(\mathcal{J}_\alpha)$ denote the argument passed to the operator $\mathcal{J}_\alpha$ within the definition of block $X$.

\begin{enumerate}

	\item For the isolated $(2i+1)$-gon in the second position of the block $A_i$, 
 the neighborhood is

 \[\mathcal{N}_{A_i}(2i+1)= [ k^i_1, k^i_2, k^i_1, k^i_2, \dots, k^i_1, 2i+3, k^i_2 ].\]

\item
For $(2i+1)$-gons in block $B_i$, the unique neighborhood is
 \[\mathcal{N}_{B_i}(2i+1)= [\arg_{B_i}(\mathcal{J}_{2i+1})] = [k^i_1, k^i_5, \dots, k^i_{2i+3}, 2i+3] .\]
\item
For $(2i+1)$-gons in block $C_i$, the unique neighborhood is
 \[\mathcal{N}_{C_i}(2i+1)= [ \arg_{C_i}(\mathcal{J}_{2i+1}) ]= [k^i_2, k^i_{2i+4},\dots,k^i_{4i+2},2i+3] .\]

\end{enumerate}

\item
For the $(2i+3)$-gons appearing in block $D_i$, the unique neighborhood is
\[\mathcal{N}_{D_i}(2i+3) =[(2i+1), \arg_{D_i}(\mathcal{J}_{2i+1})] = [2i+1, k^i_4, k^i_{4i+3},\dots,k^i_{6i+2}, k^i_3 ].\]
\item
For the pair $\big((2i+1),(2i+3)\big)$ appearing in the block $A_i$ as $\big(\dots,(k^i_2)^{\times 2},(2i+1),(2i+3),(k^i_3)^{\times 2},\dots\big)$, the unique joint neighborhood is
\[
\mathcal{N}_{A_i}\big((2i+1),(2i+3)\big)
= \left(\mathcal{N}_{B_i}(2i+1),\ \mathcal{N}_{D_i}(2i+3) \right),
\]
as illustrated in Fig~\ref{fig:nbd57} below for $i=2$.
\item
Similarly, for the pair $\big ((2i+3), (2i+1) \big)$ appearing in the block $A_i$ as $\big( \dots (k^i_3)^{\times 2},(2i+3),(2i+1),(k^i_1)^{\times 2}, \dots \big)$, the unique joint neighborhood is \[ 
\mathcal{N}_{A_i} \big ((2i+3), (2i+1) \big) = \left( \mathcal{N}_{D_i} (2i+3), \mathcal{N}_{C_i}(2i+1) \right ).
 \]
\end{enumerate}

\begin{figure}[H]
 \centering	
		
		\begin{minipage}{0.48\textwidth}
 \centering
\resizebox{\textwidth}{!}{
\begin{tikzpicture}[line cap=round, line join=round]
 % Define inner and outer radii
 \def\rin{4}
 \def\rout{6}
 
 % Define styles for lines, text, and the dotted angle arcs
 \tikzset{
 thickline/.style={line width=1.5 pt},
 innernum/.style={font=\Huge\bfseries\sffamily},
 outernum/.style={font=\LARGE\bfseries\sffamily},
 outernumlarge/.style={font=\fontsize{45}{54}\bfseries\sffamily},
 % You may want to slightly increase angle radius if you make the wedges much wider
 anglearc/.style={draw, dash pattern=on 0pt off 4pt, line cap=round, line width=1.5pt, angle radius=1.5cm}
 }
 
 % Draw the main inner and outer arcs (open-ended)
 \draw[thickline] (15:\rin) arc (15:165:\rin);
 \draw[thickline] (15:\rout) arc (15:165:\rout);
 
 % 1. Define the corner vertices on the inner arc for the 7-gon and 5-gon
 \coordinate (V1) at (45:\rin); 
 \coordinate (V2) at (56:\rin); 
 \coordinate (V3) at (67:\rin); 
 \coordinate (V4) at (78:\rin); 
 \coordinate (V5) at (89:\rin); 
 \coordinate (V6) at (100:\rin); 
 
 \coordinate (V7) at (113.3:\rin); 
 \coordinate (V8) at (126.6:\rin); 
 \coordinate (V9) at (140:\rin); 
 
 % Draw the internal radial lines separating the main numbered regions (10 | 7 | 5 | 8)
 \coordinate (Origin) at (0,0);
 \draw[thickline] (Origin) -- (V1); 
 \draw[thickline] (Origin) -- (V6); 
 \draw[thickline] (Origin) -- (V9); 
 
 % 2. INCREASED SPREAD: Outer points set to +/- 3 degrees from their anchors (6 degrees wide)
 \coordinate (O1R) at (42:\rout); \coordinate (O1L) at (48:\rout); % Anchor: 45
 \coordinate (O2R) at (53:\rout); \coordinate (O2L) at (59:\rout); % Anchor: 56
 \coordinate (O3R) at (64:\rout); \coordinate (O3L) at (70:\rout); % Anchor: 67
 \coordinate (O4R) at (75:\rout); \coordinate (O4L) at (81:\rout); % Anchor: 78
 \coordinate (O5R) at (86:\rout); \coordinate (O5L) at (92:\rout); % Anchor: 89
 \coordinate (O6R) at (97:\rout); \coordinate (O6L) at (103:\rout); % Anchor: 100
 \coordinate (O7R) at (110.3:\rout); \coordinate (O7L) at (116.3:\rout); % Anchor: 113.3
 \coordinate (O8R) at (123.6:\rout); \coordinate (O8L) at (129.6:\rout); % Anchor: 126.6
 \coordinate (O9R) at (137:\rout); \coordinate (O9L) at (143:\rout); % Anchor: 140
 
 % 3. Draw the "V" shapes radiating to the outer arc
 \draw[thickline] (V1) -- (O1R); \draw[thickline] (V1) -- (O1L);
 \draw[thickline] (V2) -- (O2R); \draw[thickline] (V2) -- (O2L);
 \draw[thickline] (V3) -- (O3R); \draw[thickline] (V3) -- (O3L);
 \draw[thickline] (V4) -- (O4R); \draw[thickline] (V4) -- (O4L);
 \draw[thickline] (V5) -- (O5R); \draw[thickline] (V5) -- (O5L);
 \draw[thickline] (V6) -- (O6R); \draw[thickline] (V6) -- (O6L);
 \draw[thickline] (V7) -- (O7R); \draw[thickline] (V7) -- (O7L);
 \draw[thickline] (V8) -- (O8R); \draw[thickline] (V8) -- (O8L);
 \draw[thickline] (V9) -- (O9R); \draw[thickline] (V9) -- (O9L);
 
 % 4. Draw the dotted angle arcs INSIDE the V-shapes
 \pic [anglearc] {angle = O1R--V1--O1L};
 \pic [anglearc] {angle = O2R--V2--O2L};
 \pic [anglearc] {angle = O3R--V3--O3L};
 \pic [anglearc] {angle = O4R--V4--O4L};
 \pic [anglearc] {angle = O5R--V5--O5L};
 \pic [anglearc] {angle = O6R--V6--O6L};
 \pic [anglearc] {angle = O7R--V7--O7L};
 \pic [anglearc] {angle = O8R--V8--O8L};
 \pic [anglearc] {angle = O9R--V9--O9L};
 
 % 5. Place Inner Labels
 \node[innernum] at (30:2.6) {$k^{\scriptscriptstyle 2}_{_3}$};
 \node[innernum] at (72.5:2.6) {7};
 \node[innernum] at (120:2.6) {5};
 \node[innernum] at (154.5:2.6) {$k^{\scriptscriptstyle 2}_{_1}$};
 
 % 6. Place Outer Labels
 \node[outernum] at (50.5:5) {\small $k^2_{14}$};
 \node[outernum] at (61.5:5) {\small $k^2_{13}$};
 \node[outernum] at (72.5:5) {\small $k^2_{12}$};
 \node[outernum] at (83.5:5) {\small $k^2_{11}$};
 \node[outernum] at (94.5:5) {\small $k^2_4$};
 \node[outernum] at (106.6:5) {\small $k^2_7$};
 \node[outernum] at (119.9:5) {\small $k^2_6$};
 
 % Place the large '6'
 \node[outernum] at (133.3:5) {\small $k^2_5$ };
 
\end{tikzpicture}}

 \caption{Neighborhood $\mathcal{N}_{B_i}(5,7)$}
 \label{fig:nbd57} 
 \end{minipage}

\end{figure}

\textbf{Case B.}
Next, we consider the cases in which an odd-sided face in the corona $\mathcal{C}_k$ shares an edge with $\partial \mathcal{C}_{k-1}$. 
\begin{description}	
\item[Subcase B.1.] Consider an odd-sided face in $\mathcal{C}_k$ that appears within the neighborhood of an odd-sided face in $\mathcal{C}_{k-1}$, which itself shares only a single vertex with $\partial \mathcal{C}_{k-2}$. 
\\
Such configurations arise within the neighborhoods detailed in Case A.1(a)-(b)-(c): $\mathcal{N}_{A_i}(2i+1)$, $\mathcal{N}_{B_i}(2i+1)$, $\mathcal{N}_{C_i}(2i+1)$, and Case A.2: $\mathcal{N}_{D_i}(2i+3)$. With the sole exception of $i=1$ in Case A.1(a), the partial neighborhoods around these odd-sided faces can be reliably extended to a full neighborhoods using the neighborhood described above.

In the exceptional case--where a $5$-gon ($2i+3$ for $i=1$) appears in the neighborhood $\mathcal{N}_{A_1}(3)$ of a triangle--the partial neighborhood $F_1= (k^2_1, 3, k^2_1)$ uniquely extends to the following full neighborhood:
			\[ \mathcal{N}_{F_1}(5) = [ 3, k^2_1, k^2_2, 7, k^2_1 ]. \]
This specific configuration is illustrated in Fig.~\ref{around7}.

\item[Subcase B.2.] 
Consider an odd-sided face in $\mathcal{C}_k$ that shares an edge with an odd-sided face in $\mathcal{C}_{k-1}$, which in turn shares an edge with $\partial \mathcal{C}_{k-2}$. Such an instance occurs, for example, for the $7$-gon in the neighborhood of type $\mathcal{N}_{F_1} (5)$ formed in Subcase B.1.

By the design of the $A_i$ block in $\mathfrak{k}_n$, the ordered tuple $(k^{i}_3, 2i+1, k^{i}_3)$, denoted by $F_i$, represents a valid partial neighborhood around a $(2i+3)$-gon. By the fundamental overlap condition $k^{i}_3= k^{i+1}_1$ in $\mathfrak{k}_n$ for $i \geq 1$, 
\[ F_i = (k^{i+1}_1, 2i+1, k^{i+1}_1) \]

We can extend the partial neighborhood $F_i$ around $2i+3$-gon to the neighborhood
\begin{equation}
\mathcal{N}_{F_i}(2i+3) = [ 2i+1, (k^{i+1}_1, k^{i+1}_2)^{\times r}, 2i+5, (k^{i+1}_1, k^{i+1}_2)^{\times s}, k^{i+1}_1 ] \quad \text{for} \ 2 \leq i \leq n-1,
\end{equation}
where $r > 1$ and $s \geq 0$ and $r+s=i$.

We claim that for each $2 \leq i \leq n-1$, an instance of Subcase B.2. occurs for a $(2i+3)$-gon in some corona $\mathcal{C}_k$ in the neighborhood of type $\mathcal{N}_{F_{i-1}}(2i+1)$ of a $(2i+1)$-gon in the previous corona $\mathcal{C}_{k-1}$, and we form a neighborhood of type $\mathcal{N}_{F_i}(2i+3)$ around such $2i+3$-gon.

The base case of the finite induction corresponds to $i=2$ for the $7$-gon in the neighborhood of type $\mathcal{N}_{F_1} (5)$ of a $5$-gon, as illustrated in Fig.~\ref{around7}. 
Observe that, by the design of the block $A_{i+1}$ in $\mathfrak{k}_n$, the segment $(k^{i+1}_2, 2i+5, k^{i+1}_1)$ within $\mathcal{N}_{F_i}(2i+3)$ forces the partial neighborhood $(k^{i+1}_3, 2i+3, k^{i+1}_3)$ around the $(2i+5)$-gon (in $\mathcal{N}_{F_i}(2i+3)$) in the next corona. Again by the overlap condition in $\mathfrak{k}_n$, $(k^{i+1}_3, 2i+3, k^{i+1}_3) = F_{i+1}$. Hence the partial neighborhood $F_{i+1}$ can be extended into the full neighborhood $\mathcal{N}_{F_{i+1}}(2i+5)$ for $i \leq n-1$, completing the induction.

		\begin{figure}[H]
		\centering
		\begin{tikzpicture}[scale=0.6, transform shape]
	% Define styles 
	\tikzstyle{edge}=[thick]
	\tikzstyle{ver}=[font=\small]
	
	% 1. VERTEX COORDINATES
	% Center of curvature remains at (0, -20). 
	
	% Top row (3 vertices for the pentagon) lying on arc of R=20
	\coordinate (v1) at (-3, -0.23);
	\coordinate (v5) at (0, 0);
	\coordinate (v4) at (3, -0.23);
	
	% Middle row lying on arc of R=17.5
	\coordinate (v2) at (-1.5, -2.56);
	\coordinate (v3) at (1.5, -2.56);
	
	% Bottom vertex for the new equilateral triangle
	\coordinate (v0) at (0, -5.16); 
	
	% 2. HORIZONTAL BOUNDARY LINES & LABELS (Tapered Lengths)
	
	% Topmost boundary line (\partial X_{n+2}), R=22.5
	% Spans strictly from x = -6.5 to x = 6.5 (Longest)
	\draw (-6.5, 1.54) arc (106.80:73.20:22.5) node[right] {\Large \textbf{$\partial X_{n+2}$}};
	
	% The line containing top of pentagon (\partial X_{n+1}), R=20
	% Spans strictly from x = -5.5 to x = 5.5 (Medium)
	\draw (-5.5, -0.77) arc (105.95:74.05:20) node[right] {\Large \textbf{$\partial X_{n+1}$}};
	
	% The line containing middle vertices (\partial X_n), R=17.5
	% Spans strictly from x = -4.5 to x = 4.5 (Shortest)
	\draw (-4.5, -3.09) arc (104.93:75.07:17.5) node[right] {\Large \textbf{$\partial X_{n}$}};
	
	% 3. POLYGON EDGES
	% Top horizontal edges of pentagon (drawn as arcs to match the boundary)
	\draw[edge] (v1) arc (98.63:90:20) node[midway, above] {\large $k^2_2$};
	\draw[edge] (v5) arc (90:81.37:20) node[midway, above] {\Huge \textbf{7}};
	
	% Connecting diagonal sides of pentagon
	\draw[edge] (v1) -- (v2) node[pos=0.3, left=0.15cm] {\large$k^1_3=k^2_1$};
	\draw[edge] (v4) -- (v3) node[pos=0.25, right=0.1cm] {\large $k^1_3=k^2_1$};
	\node at (-1.5, -3.6) { \large $k^1_1$};
	\node at (1.5, -3.6) { \large $k^1_2$};
	
	% Bottom horizontal edge of pentagon (drawn as arc to match boundary)
	\draw[edge] (v2) arc (94.92:85.08:17.5);
	\node at (0, -3.6) {\Huge \textbf{3}};
	
	% Downward Equilateral Triangle edges
	\draw[edge] (v2) -- (v0);
	\draw[edge] (v3) -- (v0);
	
	% Diagonal edges going up to the top boundary
	\draw[edge] (v4) -- (3, 2.3);
	\draw[edge] (v5) -- (1.8, 2.42);
	
	% 4. SPIKES AND DOTTED CURVES
	% Top 3 spikes (Centers pointing UPWARD to \partial X_{n+2})
	% For \v = v1
	\draw (v1) -- (-3.8, 2.17);
	\draw (v1) -- (-2.2, 2.39);
	\draw[thick, dotted] (v1) ++(-0.4, 1.25) to[bend left=30] ++(0.8, 0);

	% For \v = v5
	\draw (v5) -- (-0.8, 2.48);
	\draw (v5) -- (0.8, 2.48);
	\draw[thick, dotted] (v5) ++(-0.4, 1.25) to[bend left=30] ++(0.8, 0);

	% For \v = v4
	\draw (v4) -- (3.8, 2.17);
	\draw (v4) -- (5.2, 1.89);
	\draw[thick, dotted] (v4) ++(0.5, 1.4) to[bend left=30] ++(0.7, -0.1);

	% Middle 2 spikes (Centers pointing UPWARD to \partial X_{n+1})
	% For left side
	\draw (v2) -- (-5, -0.64);
		
	% For right side
	\draw (v3) -- (5, -0.64);
	
	% Bottom-most spikes connecting to \partial X_n
	\draw (v0) -- (-3.5, -2.85);
	\draw (v0) -- (3.5, -2.85);
	
	% 5. CENTRAL NUMBER
	\node at (0, -1.3) {\Huge \textbf{5}};
	
	% Labels around 7 gon
	\node at (0.75, 1.5) {$k^2_3$};
	\node at (1.0, 2) {$k^3_1{=}$};
	\node at (3.3, 1.55) {$k^2_3$};
	\node at (3.4, 2.0) {$k^3_1{=}$};
		
\end{tikzpicture}

 \caption{Propagation of forced partial neighborhoods}
 \label{around7}	
\end{figure}

	\item[Subcase B.3.] Finally, consider the cases where odd-sided faces in the corona $\mathcal{C}_k$ share an edge with an even-sided face in $X_{k-1}$. Neighborhood of these faces are constructed using the partial neighborhood reversal method described in Lemma~\ref{even}(b), and they inherit the corresponding neighborhood structures from the lower coronas.
		\end{description}

\textbf{Neighborhood formation of even faces.} After the odd-sided faces have been completed, the remaining
even-sided faces are treated by Lemma~\ref{even} by the systematic reversal and reflection technique.  Part~(a) provides
their full neighbourhoods, while part~(b), using the absence of an
odd-even-odd subtuple, allows these neighbourhoods to be chosen so that no additional type of partial neighborhoods are 
are formed around the odd-sided face.

Thus the above partial neighbourhood completion process around odd and even-sized faces in a $r$-th corona 
produces either the same type of partial neighborhood around the odd-sized faces in the $(r+1)$-th corona treated in Cases~A, ~B.1 and~B.3,
or make the transition 
\[
F_i\longmapsto F_{i+1},\qquad 1\leq i<n.
\]
treated in Case~B.2. The later process constructs the successive coronas through
\(\mathcal C_n\); since \(F_n\) lies in the outermost corona and need
not be completed to obtain \(X_n\), the tuple \(\mathfrak{k}_n\)
admits an \(n\)-layer patch.

\textbf{Obstruction at the $(n+1)$-th layer:} 
We have already observed that the unique neighborhood $\mathcal{N}_{A_1}(3)$ of the triangle located in $A_1$ block at the zeroth corona forces a partial neighborhood of the form $F_1=(k^2_1, 3, k^2_1)$ around the pentagon in the $1$st corona. 

For $1 \leq i \leq n-1$, parity constraints for odd-sided faces imply that any full neighborhood obtained by extending the partial neighborhood $F_i=(k^{i+1}_1, 2i+1, k^{i+1}_1)$ around the $(2i+3)$-gon in some corona (say $\mathcal{C}_{k}$) must include a segement (partial neighborhood) of the form $(k^{i+1}_1, 2i+5, k^{i+1}_2)$. This was observed in Subcase B.2 for the particular choice of neighborhood of type $\mathcal{N}_{F_i}(2i+3)$.

The  partial neighborhood $(k^{i+1}_1, 2i+5, k^{i+1}_2)$, in turn, forces a partial neighborhood of the form $(k^{i+1}_3, 2i+3, k^{i+1}_3)$ around the $(2i+5)$-gon in the next corona $\mathcal{C}_{k+1}$. By the overlap condition $k^i_3= k^{i+1}_1$ in $\mathfrak{k}_n$, this partial neighborhood is precisely $F_{i+1}$ around $(2(i+1)+3)$-gon. Thus, inductively, we have a continuous chain of forced combinatorial dependencies cascading outward in subsequent layers. 

Initiating this chain from the partial neighborhood $F_1$ around the pentagon in the first corona, we ultimately have a forced partial neighborhood $F_n=(k^{n}_3, 2n+1, k^{n}_3)$ around a $(2n+3)$-gon at the $n$-th corona $\mathcal{C}_n$. However, the cyclic tuple $\mathfrak{k}_n$ inherently does not contain a $(2n+5)$-gon. Therefore, $F_n$ cannot be extended into a valid full neighborhood of the $(2n+3)$-gon, proving that $\mathfrak{k}_{n}$ does not admit a complete $(n+1)$-th corona. Thus,
\[ \mathcal{H}(\mathfrak{k}_n)= n\]

With the above block definitions, we verify that
\[
|\tilde{\mathfrak{k}}_i| = 18i + 16, \qquad |\mathfrak{k}_n| = \sum_{i=1}^{n} (18i + 16) = 9n^2 + 25n.
\]

This completes the proof of the theorem.
\end{proof}

\begin{remark} \label{allbutone}(all-but-one property) A cyclic tuple $\mathfrak{k}$ with $\mathcal H(\mathfrak{k})=r$ has the \emph{all-but-one extension property at level $r$} if it admits an $r$-layer patch together with a compatible partial $(r+1)$-st corona that simultaneously completes the neighbourhoods of all but one face in $\mathcal C_r$. It is evident from the construction in the above theorem that cyclic tuples $\mathfrak{k}_n$ has all-but-one extension property at level $n$ 

\end{remark}

\subsection{Passage to Convex Monotiles}\label{convexh}

Dualizing an $n$-layer homogeneous patch of type $\mathfrak{k}_n$ produces an edge-to-edge $n$-layer patch by a convex monotile, but it does not provide a corresponding upper bound for the Heesch number of that tile. Indeed, $\mathfrak{k}_n$ contains even types: a corner corresponding to an even type $e$ has angle $2\pi/e$, and $e/2$ such corners have total angle $\pi$. The angle condition therefore does not exclude completed $T$-junctions in non-edge-to-edge patches; even in the edge-to-edge setting, mixed corner types may satisfy the full angle equation. Consequently, patches by the canonical dual need not be duals of homogeneous patches of type $\mathfrak{k}_n$.
\\
To remove this ambiguity, we construct from the same obstruction-propagation scheme a larger all-prime tuple $\mathfrak{p}_n$ with $\mathcal H(\mathfrak{p}_n)=n$. The primality of its entries provides the rigidity needed to control arbitrary patches by its canonical dual. We first present the following elementary arithmetic fact used for this purpose.

	\begin{lemma} \label{reciprocal1}
		
		Let $p_1,\dots,p_m$ be distinct primes and let $a_1,\dots,a_m$ be positive integers.
		Then
		\[
		\sum_{i=1}^m \frac{a_i}{p_i}=1
		\]
		if and only if $m=1$ and $a_1=p_1$. 
	\end{lemma}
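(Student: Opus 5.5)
The converse direction is immediate: if $m=1$ and $a_1=p_1$, then $\sum a_i/p_i = p_1/p_1 = 1$. So I will concentrate on showing that any solution forces $m=1$ and $a_1=p_1$. The plan is a standard $p$-adic (divisibility) argument: clear denominators and read off congruences prime by prime.

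Let $N = p_1 p_2 \cdots p_m$ and multiply the equation $\sum_{i=1}^m a_i/p_i = 1$ by $N$. This gives
\[
\sum_{i=1}^m a_i \prod_{j\neq i} p_j = N .
\]
Fix an index $i$ and reduce modulo $p_i$. Every summand with index $l\neq i$ contains the factor $p_i$, and so does $N$. Hence
\[
a_i \prod_{j\neq i} p_j \equiv 0 \pmod{p_i}.
\]
Since the $p_j$ are distinct primes, $p_i$ does not divide $\prod_{j\neq i}p_j$, and therefore $p_i \mid a_i$. Because $a_i$ is a positive integer, this yields $a_i \ge p_i$, that is, $a_i/p_i \ge 1$ for every $i$.

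Now sum over $i$: $1 = \sum_i a_i/p_i \ge m$. Since $m\ge 1$, this forces $m=1$. Then $a_1/p_1 = 1$, so $a_1 = p_1$.

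The only step needing care is the coprimality claim $p_i\nmid \prod_{j\ne i}p_j$, which is exactly where distinctness of the primes is used. Positivity of the $a_i$ is what turns the divisibility $p_i \mid a_i$ into the bound $a_i\ge p_i$, ruling out $a_i=0$. I expect no real obstacle here; the argument is elementary.
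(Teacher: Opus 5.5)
Your proof is correct and follows essentially the same route as the paper: you clear denominators by the product of the primes, reduce modulo each $p_i$ to get $p_i \mid a_i$, and use positivity to get $\sum a_i/p_i \ge m$, which forces $m=1$ and $a_1=p_1$. The paper phrases that last step by writing $a_j = p_j t_j$ with $t_j \ge 1$ and noting $\sum t_j = 1$, which is the same bound.
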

	
	\begin{proof}
		Suppose
		\[
		\sum_{i=1}^m \frac{a_i}{p_i}=1
		\]
		holds with $m\ge1$ and $a_i\in\mathbb{Z}_{>0}$. Setting 
	$ Q:=\prod_{i=1}^m p_i$, and multiplying the above equation by $Q$ gives 
		\[
		\sum_{i=1}^m a_i\frac{Q}{p_i}=Q. \tag{1}
		\]
		
		Fix an index $j\in\{1,\dots,m\}$. For $i\neq j$, the factor $Q/p_i$ is divisible by $p_j$, hence
		\[
		a_i\frac{Q}{p_i}\equiv 0\pmod{p_j}\quad\text{for }i\neq j.
		\]
		Reducing (1) modulo $p_j$ therefore yields
		\[
		a_j\frac{Q}{p_j}\equiv Q\equiv 0\pmod{p_j}.
		\]
		Since $\gcd(Q/p_j,p_j)=1$, multiplication by the inverse of $Q/p_j$ modulo $p_j$ shows that $p_j\mid a_j$. Thus we may write
		\[
		a_j = p_j t_j \quad\text{with } t_j\in\mathbb{Z}_{\ge1},
		\]
		for every $j=1,\dots,m$.
		
		Substituting $a_j=p_j t_j$ into the original sum gives
		\[
		\sum_{j=1}^m \frac{a_j}{p_j}=\sum_{j=1}^m t_j = 1.
		\]
		But each $t_j\ge1$, so the only way their sum can equal $1$ is that $m=1$ and $t_1=1$. Therefore $a_1=p_1$ and no other primes occur.
		
		Conversely, if $m=1$ and $a_1=p_1$ then $\dfrac{a_1}{p_1}=1$. This completes the proof.
	\end{proof}

	\begin{theorem}\label{convexthm} For any given positive integer $n$, there exists a convex monotile $T_n$ with Heesch number $n$.
	\end{theorem}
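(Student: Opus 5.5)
The plan follows the route announced in the introduction, with one index shift. The introduction says $\mathcal H(Q_n)=n+1$ for the canonical dual $Q_n$ of $\mathfrak p_n$, the last corona being non-edge-to-edge. So to obtain Heesch number exactly $n$ we take $T_n:=Q_{n-1}$ for $n\ge 2$; for $n=1$ we must handle $Q_0$ separately, or give a direct example.

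\textbf{Step 1: an all-prime tuple.} Rerun the construction of Theorem~\ref{heeschn} with prime entries. Replace every even label $k^i_j\ge 12$ by a large prime $q^i_j$, pairwise distinct except for the overlap condition $q^i_3=q^{i+1}_1$. The odd labels $2i+1, 2i+3$ need not be prime, so replace them by a strictly increasing chain of primes $p_1<p_2<\cdots$ playing the same roles.

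The doubled entries $(q)^{\times 2}$ were only there to let Lemma~\ref{even} work by parity reversal. Prime faces are odd, so that parity mechanism is lost. I would therefore redesign the ``even'' faces as odd primes whose neighbourhoods close by an explicit periodic pattern, much as the $2i+1$-gons in blocks $B_i,C_i$ have unique neighbourhoods given by $\arg(\mathcal J)$. The $F_i\mapsto F_{i+1}$ propagation is kept, and it terminates because $\mathfrak p_n$ contains no face of the next chain size. This gives $\mathcal H(\mathfrak p_n)=n$ together with the all-but-one property of Remark~\ref{allbutone}. Checking that every non-distinguished face still closes without the parity trick is, I expect, the main combinatorial labour. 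My first attempt would be to make the neighbourhood of each ``filler'' prime face a word of length equal to its size that repeats a block of the tuple.

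\textbf{Step 2: rigidity of the dual.} Let $Q_n$ be the polygon obtained by joining the incentres of the faces around a vertex of type $\mathfrak p_n$. Its corners correspond to the entries $p$ of $\mathfrak p_n$ and have angles $2\pi/p$. Each prime occurs in $\mathfrak p_n$ with some multiplicity, and corners of equal angle are distinguished by their adjacent edge lengths.

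Now take any patch of $Q_n$. At a completed interior vertex the corner angles sum to $2\pi$. Collecting corners by prime gives $\sum a_i/p_i=1$, so by Lemma~\ref{reciprocal1} exactly $p$ corners of a single prime $p$ meet there. A completed $T$-junction would require $\sum a_i/p_i=\tfrac12$, and the same mod-$p_j$ argument rules this out (each $p_j\mid 2a_j$ forces $a_j\ge p_j$). Hence every vertex completed inside the patch is a genuine $p$-gon vertex of the dual.

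Next use edge-length matching. The dual edges joining incentres of $k$- and $l$-gons have distinct lengths when the labels are distinct. Together with convexity, this forces the tiles around a completed vertex to fit edge-to-edge as the dual of a regular $p$-gon, with the prescribed cyclic sequence of neighbours. So the dual of any patch in which all relevant vertices are completed is a homogeneous patch of type $\mathfrak p_n$.

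\textbf{Step 3: counting coronas.} In an $r$-corona patch of $Q_n$, every vertex of the tiles in $\mathcal C_0,\dots,\mathcal C_{r-1}$ is completed. Dualising these vertices therefore yields a homogeneous patch whose layer count is roughly $r-1$. Hence $r-1\le\mathcal H(\mathfrak p_n)=n$, that is $r\le n+1$.

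For the matching lower bound, dualise the all-but-one partial $(n+1)$-st layer. This gives $n$ edge-to-edge coronas. The one missing face must then be bypassed by a final non-edge-to-edge corona, which should be possible because the outermost corona tiles need not have completed outer vertices. The remaining step, and the one I expect to be delicate, is to check that the off-by-one lands exactly on $n+1$ and that the last corona can be realised geometrically. If the upper count turns out to be exactly $n+1$ but that extra corona cannot be realised, then $T_n=Q_n$ itself would be the answer instead.
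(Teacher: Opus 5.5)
Your outline follows the paper's route: an all-prime tuple $\mathfrak p_n$, Lemma~\ref{reciprocal1} forcing single-type completed vertices, the bounds $n\le\mathcal H(Q_n)\le n+1$, and the relabelling $T_n=Q_{n-1}$. Your exclusion of completed $T$-junctions via $\sum a_i/p_i=\tfrac12$ is more explicit than the paper's. The decisive gap is the lower bound $\mathcal H(Q_n)\ge n+1$, which you leave open. Your fallback does not rescue it. Knowing only $m\le\mathcal H(Q_m)\le m+1$ for each $m$ does not produce every integer: for instance, $\mathcal H(Q_{N-1})=N-1$ and $\mathcal H(Q_N)=N+1$ would skip $N$. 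So you need the value uniformly in $n$.

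Your count is also off. Dualising the all-but-one configuration does not give $n$ coronas to which a whole new corona must be added. It gives $n$ complete coronas plus an $(n+1)$-st corona that is complete except around a single vertex, and that is exactly what the paper exploits. The only failure is at the terminal $p_{n+1}$-gon of $\mathcal C_n$, whose neighbourhood is complete except for the fans at two adjacent vertices on $\partial\mathcal C_n$. Dually, the vertex $u$ corresponding to this $p_{n+1}$-gon is a vertex of corona-$n$ tiles, so it must be surrounded. It has an angular gap of exactly $2\theta$, with $\theta=2\pi/p_{n+1}$, and everything else is complete. Two further copies $R,S$ of $Q_n$ fill this gap with their type-$p_{n+1}$ corners. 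They may meet each other and the existing tiles non-edge-to-edge, because the mismatched edge ends lie on the outer boundary, where vertices need not be completed; this does not conflict with your Step~2. This gives $\mathcal H(Q_n)=n+1$ for every $n$, which is what makes $T_n=Q_{n-1}$ work.

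Step~1 has a similar hole. Once every face is an odd prime, the reversal argument of Lemma~\ref{even} is gone. A fixed periodic neighbourhood for each former even face cannot work, because the partial neighbourhoods imposed on a $q$-gon by the previous corona have varying end faces and varying residual lengths. You need a device that completes an \emph{arbitrary} partial neighbourhood of a $q_i$-gon, of any residual length. It must also create no new incidences for the $p_i$-gons, since otherwise the forced chain $F_i\mapsto F_{i+1}$ might be circumvented.

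The paper appends, at a $q$--$q$ cut, the block $E_n=\bigoplus_{i,j}(q_1,q_i,q_j)\oplus\bigoplus_i(q_1,q_i,q_1)$. The triples $(q_1,q_i,q_r)$ connect any end $q_r$ to $q_1$, and $(q_1,q_i,q_1)$ pads any number of remaining positions, so no parity condition arises. Since $E_n$ contains no $p_i$, the neighbourhoods of the $p_i$-gons, and hence the blocking argument, are unchanged.

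Finally, in Step~2 the appeal to distinct edge lengths is not justified as stated: nothing guarantees that distinct pairs $\{k,l\}$ give distinct sums of inradii. It is also not needed. Every vertex lying on the boundary of a tile of an inner corona is an interior point of the patch. So the $T$-junction exclusion already forces those tiles to meet edge-to-edge, and the combinatorial dual is then a homogeneous patch.
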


	\begin{proof} Let $\mathfrak{k}_{n}$ be the cyclic tuple with Heesch number $n$ constructed in Theorem \ref{heeschn}. We construct a cyclic tuple $ \mathfrak{p}_n^{\circ}$ by modifying $\mathfrak{k}_{n}$ as follows: \\
	
(i) Choose consecutive $n+1$ odd primes
\[
p_1=3,\qquad p_2=5,\qquad p_3,\ldots,p_{n+1}.
\]
For each \(1\leq i\leq n\), replace the odd types \(2i+1\) and
\(2i+3\) in the \(i\)-th block by \(p_i\) and \(p_{i+1}\),
respectively. The argument lists in the $B_i$- and $C_i$-blocks are lengthened so as to contain $p_i$ entries, and similarly the argument list in the $D_i$-block is lengthened to contain $p_{i+1}-1$ entries. 

(ii) Replace each distinct even integer by primes \(q_1, q_2, \dots q_m \), where the
\(q_i\)'s are distinct odd primes, disjoint from the primes \(p_i\)'s. Repeated occurrences
of the same even integers, including the prescribed overlaps, are
replaced by the same prime.

 Then we define an auxilary tuple \[E_n=
\bigoplus_{1\le i,j\le m}(q_1,q_i,q_j)
\oplus
\bigoplus_{i=1}^m(q_1,q_i,q_1).\]
 Our final cyclic tuple is
 \[ \mathfrak{p}_n= \mathfrak{p}_n^{\circ} \oplus E_n.\]
 where $E_n$ is inserted in a $q-q$ cut of $\mathfrak{p}_n^{\circ}$

 The incidences involving the $p_i$'s are unchanged by the
modification of $\mathfrak{k}_n$, and $E_n$ contains no $p_i$. Hence the blocking argument
of Theorem~\ref{heeschn} carries over unchanged and gives
\[
\mathcal H(\mathfrak{p}_n)\leq n.
\]
Conversely, an $n$-layer tiling patch of type \(\mathfrak{p}_n\) can be constructed by following the construction for $\mathfrak{k}_n$. Firstly, any partial neighborhood of the sequences (at most length two) of $p_i$-gons between two consequtive $q$-gons at the $r $-th layer is of the same form as for $\mathfrak{k}_n$, hence it can be extended to a full neighborhood. Secondly, each partial neighborhood of a
\(q_i\)-gon occurring in this construction can first be extended
within \(\mathfrak{p}_n^{\circ}\) until its two ends have \(q_r\)
and \(q_s\) for some $r, s<m$. The segment $ [q_1,q_i,q_r], 
[q_1,q_i,q_s] $ in $E_n$ 
then connect both ends to \(q_1\), while repetitions of
$[q_1,q_i,q_1] $
fill the remaining positions. Thus, auxiliary tuple $E_n$ allows us to bypass reversal method of Lemma~\ref{even} used to extend  partial neighborhood around even-sided faces in $ \mathfrak{k}_n$ to a full neighborhood.

 Thus every
required \(q_i\)-neighborhood can be completed, and therefore
\[
\mathcal H(\mathfrak{p}_n)\geq n.
\]
Consequently,
\[
\mathcal H(\mathfrak{p}_n)=n.
\]

Let $Q_n$ be the canonical dual tile associated with
$p_{n+1}$. A corner of $Q_n$ associated with an $r$-gon in the original tiling has angle $2\pi/r$ and is referred to as a type-$r$ corner. A full configuration of copies of $Q_n$ meeting at a vertex of $Q_n$ is given by a finite choice of integers $k_1, \dots, k_m \in \mathfrak{p}_n$, possibly with repetitions, such that
		\[ \sum_{i=1}^{m} \frac{2\pi}{k_i} = 2\pi. \]
	
 Since all corner types \(k_i\) are primes, Lemma~\ref{reciprocal1} implies that they have the same type. In particular, a non-edge-to-edge $T$-junction cannot be formed by $Q_n$ to fully surround a vertex. Consequently, a tiling patch of $(r+1)$-layer by $Q_n$ must contain an $r$-layer patch as the dual of a $r$-layer homogeneous patch, and 
 \[ n \leq \mathcal H(Q_n) \leq n+1. \]

 The construction of $ \mathfrak{p}_n$ in fact gives an almost complete $(n+1)$-layer patch: the only obstruction is the terminal $p_{n+1}$-gon in $\mathcal C_n$, whose neighbourhood is complete except at two adjacent vertices in $\partial \mathcal C_n$. This is similar to the situation in Figure \ref{around7} around the $7$-gon for $n=2$. In the dual configuration this leaves, at the corresponding vertex $u$, a gap of angle $2\theta$, where $\theta=\frac{2\pi}{p_{n+1}}$. This gap can be filled by type-$p_{n+1}$ corners of two further copies of $Q_n$ together, as illustrated schematically in Figure~\ref{lasttwotiles} by the copies $R$ and $S$, each contributing a corner of angle $\theta$ to fill this gap. This is possible because $R$ and $S$ are not required to meet the remainder of the configuration edge-to-edge. Hence $Q_n$ admits an $n+1$-layer patch.

 Combining this construction with the preceding upper bound gives
\[ \mathcal H(Q_n)=n+1.\]
Relabelling 
$Q_{n-1}$ as $T_n$ proves the theorem for $n\geq2$; the case (n=1) is supplied by the base construction.

\begin{figure}[ht]
\centering
\begin{tikzpicture}[
	x=.06cm,
	y=-.06cm,
	line cap=round,
	line join=round,
	tile/.style={draw=originalblue,line width=.45pt},
	special/.style={draw=dualvermillion,line width=.65pt},
	level/.style={draw=originalblue,line width=.70pt},
	dots/.style={draw=originalblue,densely dotted,line width=.48pt},
	dualdots/.style={draw=dualvermillion,densely dotted,line width=.52pt},
	every node/.style={inner sep=.8pt}
	]
	% The three layer levels. Since the vertical TikZ scale is reversed,
	% smaller y-coordinates appear higher on the page.
	\def\ytop{114.5}
	\def\ymid{135.5}
	\def\ybot{165.7}
	
	% Horizontal layer endpoints.
	\coordinate (topL) at (49.27,\ytop);
	\coordinate (topR) at (199.68,\ytop);

	\coordinate (midL) at (57.24,\ymid);

	% Left dotted portion (chosen as mirror image of the right dotted part)
	\coordinate (midLD1) at (75.48,\ymid);
	\coordinate (midLD2) at (85.75,\ymid);

	% Right dotted portion
	\coordinate (midD1) at (156.33,\ymid);
	\coordinate (midD2) at (166.60,\ymid);

	\coordinate (midR) at (184.84,\ymid);
	\coordinate (botL) at (79.15,\ybot);
	\coordinate (botR) at (148.67,\ybot);
	
	% Principal vertices on the middle and bottom levels.
	\coordinate (mL)  at (86.611,\ymid);
	\coordinate (vi)  at (103.245,\ymid);
	\coordinate (vip) at (131.407,\ymid);
	\coordinate (mR1) at (148.882,\ymid);
	\coordinate (mR2) at (168.260,\ymid);
	\coordinate (bL1) at (93.248,\ybot);
	\coordinate (bR1) at (134.065,\ybot);
	
	% Central vertex and exact intersections of the trimmed lower rays.
	\coordinate (v)  at (116.386,150.879);
	\coordinate (jL) at (80.69,154.0);
	\coordinate (jR) at (160.8,156.0);
	
	% Vertices of the two copies R and S in the original schematic.
	\coordinate (rL) at (87.52,128.12);
	\coordinate (rT) at (109.11,118.73);
	\coordinate (rU) at (119.98,126.95);
	\coordinate (sU) at (147.58,126.710);
	\coordinate (sT) at (136.800,117.56);
	\coordinate (sL) at (120.85,120.85);

	% Extra vertices so that R and S become proper polygons
	\coordinate (rP1) at (93.0,120.3);
	\coordinate (rP2) at (101.2,116.8);
	\coordinate (sP1) at (148.6,120.3);
	\coordinate (sP2) at (144.3,117.0);
	
	% Other named vertices used by the fan around v.
	\coordinate (uL1) at (92.45,132.34);
	\coordinate (uL3) at (70.9,120);
	\coordinate (uR1) at (151.82,123.42);
	\coordinate (uR2) at (157.449,121.78);
	\coordinate (uR3) at (160.265,131.64);
	\coordinate (longR) at (182.84,133);
	
	% All three levels are exactly straight and horizontal.
	\draw[level] (topL)--(topR);

	% Middle level with symmetric dotted portions
	\draw[level] (midL)--(midLD1);
	\draw[dots]  (midLD1)--(midLD2);
	\draw[level] (midLD2)--(midD1);
	\draw[dots]  (midD1)--(midD2);
	\draw[level] (midD2)--(midR);

	\draw[level] (botL)--(botR);
	
	% Left part of the original schematic.
	\draw[tile] (68.6,\ymid)--(bL1);
	\draw[tile] (59.6,\ytop)--(mL);
	\draw[tile] (50.6,\ytop)--(mL);
	\draw[tile] (75.6,\ytop)--(mL);
	\draw[special] (uL3)--(uL1)--(v);
	
	% Lower rays terminate exactly on neighbouring polygon edges.
	\draw[special] (v)--(jL);
	\draw[tile] (midR)--(bR1);
	\draw[special] (v)--(jR);
	
	% Fans whose endpoints lie exactly on the top level.
	\draw[tile] (vi)--(93.072,\ytop);
	\draw[tile] (143.135,\ytop)--(vip);
	\draw[tile] (mR1)--(158.153,\ytop);
	\draw[tile] (mR1)--(164.020,\ytop);
	\draw[tile] (mR1)--(168.781,\ytop);
	
	% The two copies R and S as proper polygons; no edge is drawn twice.
	\draw[special] (uL1)--(rL);
	\draw[special] (rL)--(rP1)--(rP2)--(rT);
	\draw[special] (rT)--(rU);

	% The common R--S side is drawn once, from v through sL to sT.
	\draw[special] (v)--(sL)--(sT);
	\draw[special] (v)--(sU);
	\draw[special] (sU)--(sP1)--(sP2)--(sT);
	
	% Remaining rays of the original central fan.
	\draw[special] (sU)--(uR1)--(uR2);
	\draw[dualdots] (uR2)
	.. controls (160.669,121.798) and (162.846,126.593) .. (uR3);
	\draw[special] (v)--(uR3);
	\coordinate (rightFanMeet) at ($(mR2)!.242!(171.763,\ytop)$);
	\draw[dualdots] (uR3)--(rightFanMeet);
	\draw[special] (v)--(longR);
	
	% Dotted angular indications around v.
	\coordinate (a1) at ($(v)!.31!(rL)$);
	\coordinate (a2) at ($(v)!.266!(sL)$);
	\draw[dualdots] (a1)
	.. controls (110.5,141.7) and (114.2,141.6) .. (a2);
	
	\coordinate (a3) at ($(v)!.203!(sL)$);
	\coordinate (a4) at ($(v)!.222!(uR1)$);
	\draw[dualdots] (a3)
	.. controls (119.8,143.2) and (122.2,143.5) .. (a4);
	
	\coordinate (a5) at ($(v)!.335!(uR1)$);
	\coordinate (a6) at ($(v)!.332!(uR3)$);
	\draw[dualdots] (a5)
	.. controls (129.8,141.9) and (130.6,143.2) .. (a6);
	
	\coordinate (a7) at ($(v)!.22!(longR)$);
	\coordinate (a8) at ($(v)!.31!(jR)$);
	\draw[dualdots] (a7)
	.. controls (134.5,148.5) and (131.5,152.2) .. (a8);
	
	\coordinate (a9) at ($(v)!.36!(jL)$);
	\coordinate (a10) at ($(v)!.28!(rL)$);
	\draw[dualdots] (a9)
	.. controls (100.5,146.4) and (104.7,145.0) .. (a10);
	
	\coordinate (a11) at ($(v)!.40!(jL)$);
	\coordinate (a12) at ($(v)!.35!(jR)$);
	\draw[dualdots] (a11)
	.. controls (111.5,155.7) and (120.5,155.2) .. (a12);
	
	% Labels
	\node[anchor=east,font=\normalsize] at (47.8,\ytop)
	{$\partial X_{n+1}$};
	\node[anchor=east,font=\normalsize] at (55.8,\ymid)
	{$\partial X_n$};
	\node[anchor=east,font=\normalsize] at (77.8,\ybot)
	{$\partial X_{n-1}$};
	
	\node[font=\small] at (112.6,128.3) {$R$};
	\node[font=\small] at (126.0,127.4) {$S$};
	\node[font=\scriptsize] at (104.5,138.2) {$v_i$};
	\node[font=\scriptsize] at (129,137.6) {$v_{i\!{+}\!1}$};
	\node[font=\scriptsize] at (84.5,138.2) {$v_{i\!{-}\!1}$};
	\node[font=\scriptsize] at (149,137.6) {$v_{i\!{+}\!1}$};
	
	\node[font=\scriptsize] at (115.4,152.5) {$u$};
	
	\node[font=\small,text=dualvermillion] at (99.7,149.5) {$\theta$};
	\node[font=\small,text=dualvermillion] at (108.9,140.9) {$\theta$};
	\node[font=\small,text=dualvermillion] at (120.8,141.9) {$\theta$};
	\node[font=\small,text=dualvermillion] at (132.7,141.5) {$\theta$};
	\node[font=\small,text=dualvermillion] at (140,150.0) {$\theta=\frac{2\pi}{p_{n+1}}$};
	
	\node[font=\small] at (125.0,160.5) {{\large $p_{n+1}\text{-gon}$}};
	\node[font=\small,text=dualvermillion] at (115.2,156.5)
	{$\displaystyle \theta$};
\end{tikzpicture}
\caption{The local configuration around the terminal \(p_{n+1}\)-gon and its dual}
\label{lasttwotiles}
\end{figure}

\end{proof}

\begin{remark}[Euclidean all-but-one configurations] Motivated by the obstruction-propagation mechanism of Theorem~\ref{heeschn} of tuples with all-but-one extension property (\ref{allbutone}) at level $n$, we searched for such examples of Euclidean tiles with the same property. 
Among the polyominoes catalogued in Caplan's database~\cite{K21,K22}, we found a $9$-omino with $\mathcal H=1$ and a $16$-omino with $\mathcal H=2$ possessing this property at levels $1$ and $2$, respectively; see Figures~\ref{9-omino} and~\ref{fig:3369}. In particular, the $9$-omino realizes the Euclidean counterpart of the base configuration in Proposition~\ref{1butall}. These examples show that the required finite-stage configurations occur in the Euclidean plane, but they do not yet provide a procedure for passing from one level to the next. Such a procedure would have to complete the exceptional neighbourhood in one corona while forcing a new exceptional neighbourhood in the following corona, without interfering elsewhere.

	\definecolor{centralfill}{RGB}{62,78,92}
	\definecolor{firstfill}{RGB}{83,155,211}
	\definecolor{secondfill}{RGB}{248,176,79}
	
	\tikzset{
		cellgrid/.style={draw=white!72, line width=0.18pt},
		copyborder/.style={draw=black, line width=0.85pt,
			line cap=round, line join=round},
		copylabel/.style={font=\scriptsize\bfseries, text=black,
			fill=white, fill opacity=.72, text opacity=1,
			inner sep=1pt, rounded corners=.5pt}
	}
	
	% 9-OMINO DRAWING MACRO
	% #1 = TikZ scope options (e.g., cm or shift); #2 = fill colour; #3 = label.
	\newcommand{\DrawNine}[3][]{%
		\begin{scope}[#1]
			\foreach \x/\y in {0/2, 1/1, 1/2, 1/3, 2/0, 2/1, 2/3, 3/0, 3/3} {
				\path[fill=#2] (\x,\y) rectangle ++(1,1);
				\draw[cellgrid] (\x,\y) rectangle ++(1,1);
			}
			\draw[copyborder] (0,2)--(1,2)--(1,1)--(2,1)--(2,0)--(3,0)--(4,0)--
			(4,1)--(3,1)--(3,2)--(2,2)--(2,3)--(3,3)--(4,3)--
			(4,4)--(3,4)--(2,4)--(1,4)--(1,3)--(0,3)--cycle;
			\node[copylabel] at (2,2) {#3};
		\end{scope}%
	}
	
	% ----------------------------------------------------------------
	% FIGURE 1: 9-omino, first corona and all-but-one partial second corona.
	% ----------------------------------------------------------------
	\begin{figure}[H]
		\centering
		\begin{tikzpicture}[x=.35cm,y=.35cm]
			% Partial second corona: all copies have the same orange fill.
			\DrawNine[cm={ 1, 0, 0, 1,( 0,-5)}]{secondfill}{$S_1$}    % r0
			\DrawNine[cm={ 0,-1, 1, 0,( 4, 1)}]{secondfill}{$S_2$}    % r270
			\DrawNine[cm={ 0,-1, 1, 0,( 5, 7)}]{secondfill}{$S_4$}    % r270
			\DrawNine[cm={ 0, 1,-1, 0,(-4, 1)}]{secondfill}{$S_9$}    % r90
			\DrawNine[cm={ 0,-1,-1, 0,(-2, 3)}]{secondfill}{$S_{10}$}    % f90
			\DrawNine[cm={ 0, 1, 1, 0,( 3, 5)}]{secondfill}{$S_5$}    % f270
			\DrawNine[cm={-1, 0, 0,-1,(-1, 9)}]{secondfill}{$S_8$}    % r180
			\DrawNine[cm={ 1, 0, 0, 1,(-1, 5)}]{secondfill}{$S_6$}    % r0
			\DrawNine[cm={ 1, 0, 0, 1,( 6,-1)}]{secondfill}{$S_3$}    % r0
			\DrawNine[cm={ 0, 1, 1, 0,(-4, 7)}]{secondfill}{$S_{7}$}% f270
			
			% First corona: all six copies have the same blue fill.
			\DrawNine[cm={-1, 0, 0,-1,( 6, 3)}]{firstfill}{$T_1$}     % r180
			\DrawNine[cm={ 0, 1,-1, 0,( 8, 1)}]{firstfill}{$T_2$}     % r90
			\DrawNine[cm={ 0,-1,-1, 0,( 2, 3)}]{firstfill}{$T_6$}     % f90; exceptional
			\DrawNine[cm={ 0,-1, 1, 0,(-3, 7)}]{firstfill}{$T_4$}     % r270
			\DrawNine[cm={ 0, 1,-1, 0,( 0, 1)}]{firstfill}{$T_5$}     % r90
			\DrawNine[cm={-1, 0, 0,-1,( 5, 8)}]{firstfill}{$T_3$}     % r180
			
			% Central copy.
			\DrawNine[cm={ 1, 0, 0, 1,( 0, 0)}]{centralfill}{\textcolor{black}{$T_0$}}
		\end{tikzpicture}
		 \captionof{figure}{A $9$-omino with $\mathcal{H}=1$ and  "all but one property"}
\label{9-omino}
			\end{figure}

	\begin{figure}[H]
		\centering
		
		\centering
\includegraphics[scale=0.20]{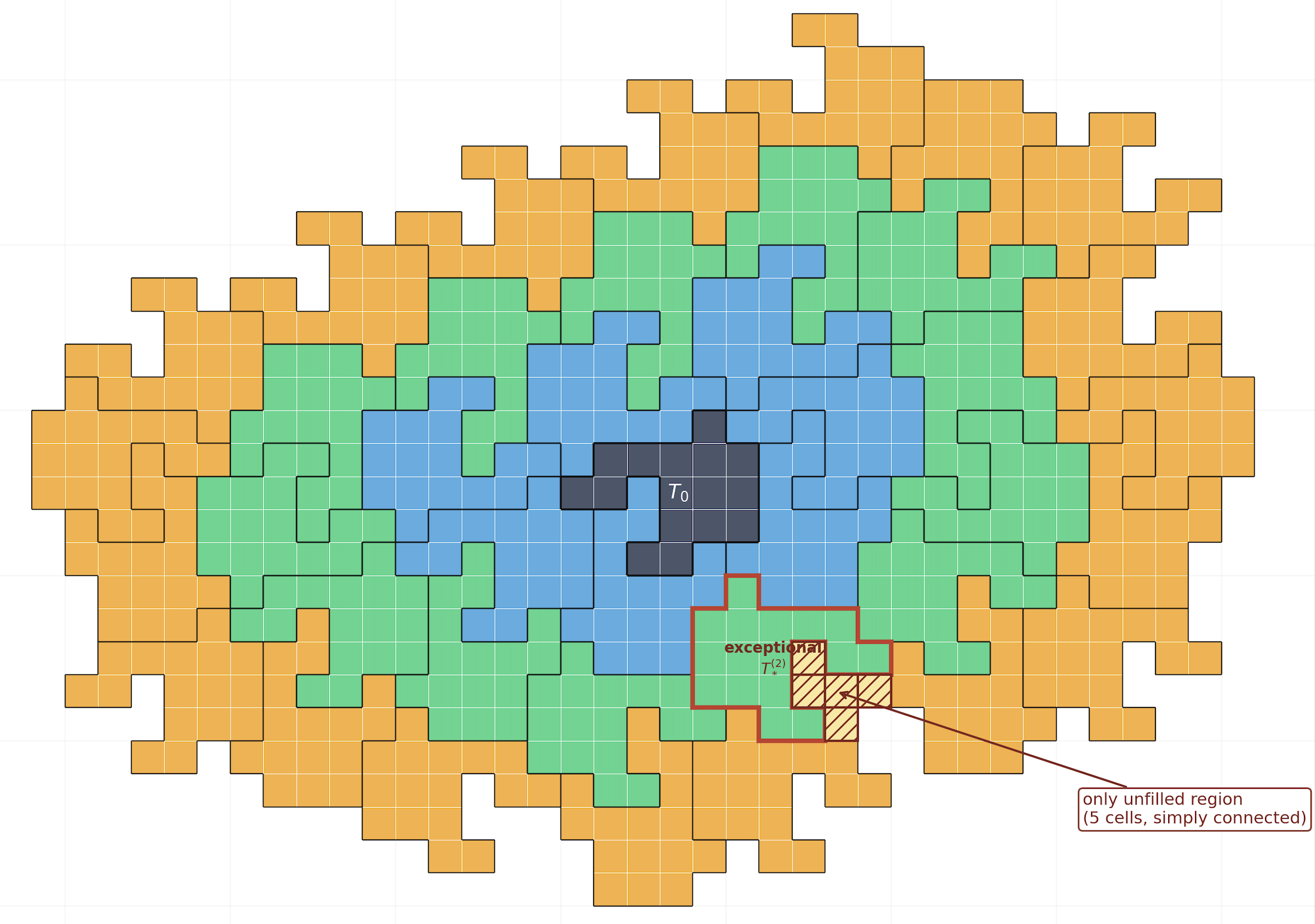}
\captionof{figure}{A $16$-omino with $\mathcal{H}=2$ and all-but-one extension property at level $2$}
		\label{fig:3369}

	\end{figure}

\end{remark}

	\section{Weakly Aperiodic Vertex Types} \label{aperiodic}
	In \cite{AM23}, the present author constructed a set of four regular polygons of sizes $\{3,5,k_3,k_4\}$, with $k_3 \neq k_4$ and $k_3,k_4 \geq 12$, that admit no periodic tiling. The proof relied on a specific double-counting argument to establish the aperiodicity of this tile set. To adapt this method to the homogeneous tiling setting and obtain an aperiodic cyclic tuple, we consider
	
	\[ \mathfrak{k}_a= [3, 5, k, 5, l, 5, m, 5, l, 5, k, 5, l, 5]\] 
	for distinct integers $k, l, m \geq 6$. 
	\\
	A tiling of type $\mathfrak{k}_a$ can be constructed by the inductive corona formation method. To extend the layer $X_k$ to $X_{k+1}$, one can first complete the neighborhoods around the triangles, next the pentagons and finally the $k$-, $l$,- and $m$-gons. Since the triangles and $k$-, $l$- and $m$-gons share edges only with pentagons in $\mathfrak{k}_a$, no constraint arises in forming neighborhoods around them at any stage of the construction. After the construction of neighborhoods around the triangles, the only possible partial neighborhood around a pentagon is $[3, k, l]$. Such a partial neighborhood can be extended to full neighborhood of type $[3, k, l, k, l]$ or $[3, k, l, m, l]$. Hence, there exists a tiling of type $\mathfrak{k}_a$.
	\par
	The following observations hold for any homogeneous tiling of this type:
	\par
	i) Each triangle shares common edges with three pentagons, and meets fifteen other pentagons only at vertices---five pentagons with each vertex of the triangle.
	\par
	ii) Each pentagon shares at least one common edge (at most three vertex-only incidences) with a triangle; otherwise one cannot form a valid neighborhood of the pentagon.
	\par
	In a strongly periodic tiling, Observation~(i) implies that the ratio of the total number of common edges to the total number of common single vertices between the triangles and the pentagons is $1:5$. On the other hand, Observation~(ii) forces this ratio to be $r:3$ for some $r \geq 1$. This inconsistency yields a contradiction, and hence these types are weakly aperiodic.

	\subsection{Weakly Aperiodic Convex Monotiles} Assume now that $3,5,k,l$ and $m$ are distinct primes, and let $T_a$ be the canonical dual of a fan of type $\mathfrak{k}_a$. Since a homogeneous tiling of type $\mathfrak{k}_a$ exists, its dual gives a tiling of $\mathbb H^2$ by congruent copies of the convex tile $T_a$.	
By a similar argument to that used in Theorem \ref{convexthm}, any full tiling by $T_a$ must be the dual of a homogeneous tiling of type $\mathfrak{k}_a$ even without any edge-to-edge restriction. Indeed, Lemma~\ref{reciprocal1} implies that every completed vertex is formed by corners of a single type. Since the dual construction is equivariant under isometries, $T_{a}$ is weakly aperiodic. Varying the distinct primes $k,l$ and $m$ produces infinitely many such tiles. Their interior angles, and consequently their areas, are rational multiples of $\pi$.

	\section*{Acknowledgement} The author would like to thank Subhojoy Gupta and Basudeb Datta for their valuable suggestions. He especially thanks Subhojoy Gupta for identifying a gap in the proof of the main result in the first draft of the manuscript. The research was supported by the Seed Grant DoRDC/730 of TIET for the year 2024-25. 
	\bibliographystyle{amsplain}
	\bibliography{bibtile}
	
\end{document}